\newtheorem{thm}{Theorem}[section]
\newtheorem{cor}[thm]{Corollary}
\newtheorem{prop}[thm]{Proposition}
\theoremstyle{definition}
\newtheorem{prob}[thm]{Problem}
\theoremstyle{remark}
\newtheorem{rem}[thm]{Remark}
\numberwithin{equation}{section}
\numberwithin{figure}{section}
\newcommand{\diff}{\mathrm{d}}
\newcommand{\C}{{\mathbb C}}
\newcommand{\R}{{\mathbb R}}
\newcommand{\D}{{\mathbb D}}
\newcommand{\Te}{{\mathbb T}}
\newcommand{\imag}{\mathrm{i}}
\newcommand{\hDelta}{\varDelta}
\newcommand{\dbar}{\bar\partial}
\newcommand{\Lop}{{\mathbf L}}
\newcommand{\Itilde}{\tilde I}
\newcommand{\Dop}{\mathbf{D}}
\newcommand{\Iop}{\mathbf{I}}
\newcommand\Rop{\mathbf{R}}
\newcommand\Hop{\mathbf{H}}
\newcommand{\Bop}{\mathbf{B}}
\DeclareMathOperator{\re}{Re}
\DeclareMathOperator{\im}{Im}
\begin{document}

%
\title{On the uniqueness theorem of Holmgren}


%

\author{Haakan Hedenmalm}
\address{
Hedenmalm: Department of Mathematics\\
KTH Royal Institute of Technology\\
S--10044 Stockholm\\
Sweden}

\email{haakanh@math.kth.se}

\subjclass[2000]{Primary }
\keywords{Cauchy problem, Dirichlet problem, Holmgren's uniqueness theorem}
 
\thanks{The research of the author was supported by the G\"oran Gustafsson 
Foundation (KVA) and by Vetenskapsr\aa{}det (VR)}

\begin{abstract} 
We review the classical Cauchy-Kovalevskaya theorem and the related 
uniqueness theorem of Holmgren, in the simple setting of
powers of the Laplacian and a smooth curve segment in the plane.
As a local problem, the Cauchy-Kovalevskaya and Holmgren 
theorems supply a complete answer to the existence and uniqueness issues. 
Here, we consider a \emph{global uniqueness problem} of Holmgren's type. 
Perhaps surprisingly, we obtain a connection with the theory of quadrature 
identities, which demonstrates that rather subtle algebraic properties of 
the curve come into play.
For instance, if $\Omega$ is the interior domain of an ellipse, and $I$
is a proper arc of the ellipse $\partial\Omega$, then there exists a 
nontrivial biharmonic function $u$ in $\Omega$ which vanishes to degree three
on $I$ (i.e., all partial derivatives of $u$ of order $\le2$ vanish on $I$) 
\emph{if and only if} the ellipse is a circle.
\end{abstract}

\maketitle


\section{Introduction} 

\subsection{Basic notation}
\label{subsec-1.1}
Let 
\[
\Delta:=\frac{\partial^2}{\partial x^2}+
\frac{\partial^2}{\partial y^2},\qquad\diff A(z):=\diff x\diff y,
\]
denote the Laplacian and the area element, respectively. 
Here, $z=x+\imag y$
is the standard decomposition into real and imaginary parts. We let $\C$ denote
the complex plane. We also need the standard complex differential operators
\[
\dbar_z:=\frac{1}{2}\bigg(\frac{\partial}{\partial x}+\imag
\frac{\partial}{\partial y}\bigg),\quad \partial_z:=
\frac{1}{2}\bigg(\frac{\partial}{\partial x}-\imag
\frac{\partial}{\partial y}\bigg),
\]
so that $\Delta$ factors as $\hDelta=4\partial_z\dbar_z$. We sometimes drop
indication of the differentiation variable $z$.
A function $u$  on a domain is \emph{harmonic} if $\Delta u=0$ on the domain.
Similarly, for a positive integer $N$, the function $u$ is 
\emph{$N$-harmonic} if $\Delta^N u=0$ on the domain in question.
  

\subsection{The theorems of Cauchy-Kovalevskaya and Holmgren
for powers of the Laplacian} 
Let $\Omega$ be a bounded simply connected domain in the plane $\C$ with 
smooth 
boundary. We let $\partial_n$ denote the operation of taking the normal 
derivative.  For $j=1,2,3,\ldots$, we let $\partial_n^j$ denote the $j$-th 
order normal derivative. Here, we understand those higher derivatives in terms
of higher derivatives of the restriction of the function to the line
normal to the boundary at the given boundary point. We consider the 
Cauchy-Kovalevskaya for powers of the Laplacian $\Delta^N$, where 
$N=1,2,3\ldots$.

\begin{thm}
{\rm(Cauchy-Kovalevskaya)} Suppose $I$ is a real-analytic nontrivial arc of 
$\partial\Omega$. Then if $f_j$, for $j=1,\ldots,2N$, are real-analytic 
functions on $I$, there is a function $u$ with $\Delta^N u=0$ in a (planar) 
neighborhood of $I$,  
having $\partial_n^{j-1} u|_I=f_j$ for $j=1,\ldots,2N$. 
The solution $u$ is unique among the real-analytic functions.
\label{thm-CK1}
\end{thm}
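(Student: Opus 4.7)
The plan is to reduce to the classical Cauchy-Kovalevskaya theorem for analytic PDEs. The only thing that requires checking is non-characteristicity: since $\Delta^N$ has principal symbol $(\xi_1^2+\xi_2^2)^N$, which is strictly positive on every nonzero real covector $\xi=(\xi_1,\xi_2)$, the operator $\Delta^N$ is elliptic, and in particular any real-analytic curve in $\C$ is non-characteristic for it. This is the only input CK needs beyond analyticity of data.

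First I would introduce real-analytic tangential-normal coordinates $(s,n)$ in a tubular neighborhood of $I$, with $s$ arclength along $I$ and $n$ signed normal distance, so that $I$ corresponds locally to $\{n=0\}$. Real-analyticity of $I$ ensures these coordinates are themselves real-analytic. A direct computation gives
\[
\Delta=\partial_n^2+a(s,n)\partial_s^2+b(s,n)\partial_s+c(s,n)\partial_n
\]
with real-analytic coefficients satisfying $a(s,0)\equiv 1$. Since $\partial_n$ cannot gain order when commuted past analytic coefficients, the term of highest normal order in $\Delta^N$ is $(\partial_n^2)^N=\partial_n^{2N}$, appearing with coefficient identically $1$. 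Solving $\Delta^N u=0$ for $\partial_n^{2N}u$ therefore puts the equation in Cauchy-Kovalevskaya normal form
\[
\partial_n^{2N}u=F\bigl(s,n,\{\partial_s^j\partial_n^k u\}_{k<2N,\,j+k\le 2N}\bigr),
\]
with $F$ real-analytic in all its arguments.

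The prescribed data $\partial_n^{j-1}u|_{n=0}=f_j$ for $j=1,\dots,2N$ specify exactly the functions $u,\partial_n u,\dots,\partial_n^{2N-1}u$ on $\{n=0\}$, which is precisely what CK requires, and the $f_j$ are real-analytic by hypothesis. The classical Cauchy-Kovalevskaya theorem then produces a unique real-analytic solution near each point of $I$; local uniqueness lets these germs be glued into a single real-analytic $u$ defined in a planar neighborhood of $I$.

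The only step involving any genuine computation is the derivation of the normal form, which is really bookkeeping once ellipticity is in hand; I do not anticipate a serious obstacle. A more hands-on alternative would be to expand $u(s,n)=\sum_{k\ge 0}a_k(s)n^k/k!$, read off $a_0,\dots,a_{2N-1}$ directly from the Cauchy data, and then generate $a_{2N},a_{2N+1},\dots$ recursively from the equation, verifying convergence by the method of majorants. That approach would avoid an abstract appeal to CK but amounts to an inline reproof of it.
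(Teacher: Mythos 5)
Your argument is correct, but there is nothing in the paper to compare it against: Theorem \ref{thm-CK1} is quoted as classical background and never proved there (the paper's own proofs begin with Theorem \ref{thm-1.1}; for the classical material it points to John \cite{John1}). What you have written is the standard textbook reduction, and it is sound: ellipticity of $\Delta^N$ makes every real-analytic arc non-characteristic, and real-analytic tangential--normal coordinates exist precisely because $I$ is a real-analytic embedded arc, which is where that hypothesis enters. Your claimed normal form is confirmed by the Frenet-frame computation: with $X(s,n)=\gamma(s)+n\nu(s)$ one gets the metric $(1-\kappa n)^2\,\diff s^2+\diff n^2$, whence
\[
\Delta=\partial_n^2-\frac{\kappa}{1-\kappa n}\,\partial_n
+\frac{1}{(1-\kappa n)^2}\,\partial_s^2
+\frac{n\kappa'}{(1-\kappa n)^3}\,\partial_s,
\]
so $a(s,0)\equiv1$ as you asserted, and composing $N$ times leaves $\partial_n^{2N}$ with coefficient identically $1$, all other terms having normal order $k\le 2N-1$ and total order $j+k\le 2N$ --- exactly the Kovalevskaya form for an equation of order $2N$. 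Two minor points deserve a word. First, the gluing step is compressed as stated: CK uniqueness is local to the Cauchy surface, so to identify two local solutions away from $I$ you should shrink the neighborhood of $I$ until overlaps of adjacent patches are connected and meet $I$, and then propagate the identity by real-analytic continuation through such an overlap. Second, the uniqueness assertion of the theorem is only among real-analytic solutions, which is exactly what CK delivers; it is the subsequent Holmgren theorem (Theorem \ref{thm-H1}) that upgrades this to smooth solutions, so you are right not to claim more. With those caveats, both your normal-form route and the majorant alternative you sketch are complete proofs.
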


Holmgren's theorem gives uniqueness under less restrictive assumptions on 
the data and the solution.

\begin{thm}
{\rm(Holmgren)} Suppose $I$ is a real-analytic nontrivial arc of 
$\partial\Omega$. Then if $u$ is smooth on a planar neighborhood $\mathcal{O}$
of $I$ and $\Delta^N u=0$ holds on $\mathcal{O}\cap\Omega$, with 
$\partial_n^{j-1}u|_I=0$ for $j=1,\ldots,2N$, then $u(z)\equiv 0$ on 
$\mathcal{O}\cap\Omega$, 
provided that the open set $\mathcal{O}\cap\Omega$ is connected.
\label{thm-H1}
\end{thm}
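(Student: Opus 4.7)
The plan is to invoke the classical duality method of Holmgren. The idea is to construct, for each polynomial $P$, an auxiliary solution of the inhomogeneous equation $\Delta^N v = P$ that vanishes to order $2N-1$ on a real-analytic curve $\Gamma_\epsilon$ which, together with a sub-arc of $I$, bounds a thin ``lens'' $\Omega_\epsilon\subset\mathcal{O}\cap\Omega$. A Green-type identity then forces $\int_{\Omega_\epsilon}u\,P\,\dA=0$ for every polynomial $P$, hence $u\equiv0$ on $\Omega_\epsilon$, and the conclusion is propagated by real-analyticity.

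Concretely, I would first fix a point $p_0\in I$ and choose a one-parameter family of real-analytic arcs $\Gamma_\epsilon$, for $0<\epsilon\le\epsilon_0$, sharing endpoints with a sub-arc $I_\epsilon\subset I$ and bounding a lens $\Omega_\epsilon\subset\mathcal{O}\cap\Omega$ that shrinks to $p_0$ as $\epsilon\to0$; a convenient choice is to push $I_\epsilon$ a distance $\epsilon$ into $\Omega$ along the inward normal. For each polynomial $P$ I would then apply the Cauchy--Kovalevskaya theorem (Theorem~\ref{thm-CK1}, in its inhomogeneous form) to obtain a real-analytic $v_P$ with $\Delta^N v_P=P$ near $\Gamma_\epsilon$ and $\partial_n^{j-1}v_P|_{\Gamma_\epsilon}=0$ for $j=1,\ldots,2N$. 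Because $\Delta^N$ is formally self-adjoint, iterated integration by parts yields
\[
\int_{\Omega_\epsilon}\bigl(u\,\Delta^N v_P-v_P\,\Delta^N u\bigr)\dA
= \int_{\partial\Omega_\epsilon} B[u,v_P]\,\ds,
\]
where $B[u,v_P]$ is a bilinear expression in the normal (and tangential) derivatives $\partial_n^j u$ and $\partial_n^k v_P$ with $0\le j,k\le 2N-1$. On $I_\epsilon\subset I$ every $\partial_n^j u$ with $j\le2N-1$ vanishes by hypothesis, and on $\Gamma_\epsilon$ every $\partial_n^j v_P$ with $j\le2N-1$ vanishes by construction, so the boundary contribution is zero. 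Combined with $\Delta^N u=0$, this gives $\int_{\Omega_\epsilon} u\,P\,\dA=0$ for every polynomial $P$. Stone--Weierstrass density in $C(\overline{\Omega_\epsilon})$ then yields $u\equiv0$ on $\Omega_\epsilon$; since $\Delta^N$ is elliptic with constant (hence analytic) coefficients, $u$ is real-analytic on the connected set $\mathcal{O}\cap\Omega$, and vanishing on the nonempty open subset $\Omega_\epsilon$ forces $u\equiv0$ throughout.

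The delicate point I expect to be the main obstacle is the construction of $v_P$ in Step~2: Cauchy--Kovalevskaya produces $v_P$ only on \emph{some} neighborhood of $\Gamma_\epsilon$, whereas the Green identity demands $v_P$ on all of $\overline{\Omega_\epsilon}$. One must therefore verify a quantitative version of Cauchy--Kovalevskaya, with an explicit lower bound on the radius of analytic continuation in terms of the real-analytic parameters of $\Gamma_\epsilon$, and then choose $\epsilon$ small enough that the CK neighborhood of $\Gamma_\epsilon$ engulfs $\Omega_\epsilon$. This is precisely the classical pivot of Holmgren's argument and is where the geometric thinness of the lens $\Omega_\epsilon$ is used.
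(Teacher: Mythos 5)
The paper never proves Theorem~\ref{thm-H1}: it is invoked as a classical result, with the proof delegated to the references \cite{John1} and \cite{Horm1}, so there is no in-paper argument to compare against. Your proposal is, in substance, exactly the classical Holmgren duality argument from those sources, and it is essentially correct: the lens $\Omega_\epsilon$, the adjoint Cauchy problem solved by Cauchy--Kovalevskaya with flat data on the analytic cap $\Gamma_\epsilon$, the Green identity killing all polynomial moments of $u$, and Stone--Weierstrass. Two points deserve tightening. First, your geometric construction is off as stated: translating $I_\epsilon$ a distance $\epsilon$ along the inward normal produces a curve that does \emph{not} share endpoints with $I_\epsilon$; the standard fix is a one-parameter family of analytic caps (arcs of circles or parabolas) through two fixed points of $I$, which is harmless here since $\Delta^N$ is elliptic and hence every curve is noncharacteristic. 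Second, on the pivot you correctly identify: the uniformity in $P$ of the Cauchy--Kovalevskaya existence radius follows from linearity --- the magnitude of the data scales out, and the majorant method gives a radius depending only on the radius of analyticity of the data and the geometry of $\Gamma_\epsilon$; since polynomials are entire, the radius is uniform over all $P$ (alternatively, subtract a polynomial particular solution of $\Delta^N v_0=P$ and solve only the homogeneous Cauchy problem). One further remark: your final propagation step, using analytic hypoellipticity of $\Delta^N$ to spread the vanishing from one lens to all of the connected set $\mathcal{O}\cap\Omega$, is a genuine (and legitimate) shortcut compared with the classical sweeping of a continuum of lenses; the sweeping is needed only for non-elliptic operators, where solutions need not be real-analytic, so for $\Delta^N$ your shorter route is fine. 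Also note that the vanishing of the boundary bilinear form $B[u,v_P]$ on $I_\epsilon$ uses that the full $(2N-1)$-jet of $u$ vanishes there, not just the normal derivatives; this does follow from the hypothesis $\partial_n^{j-1}u|_I=0$, $j=1,\ldots,2N$, by differentiating these identities tangentially along $I$, but it should be said.
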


As local statements, the theorems of Cauchy-Kovalevskaya and Holmgren 
complement each other and supply a complete answer to the relevant existence
and uniqueness issues. However, it is often given that the solution 
$u$ is \emph{global}, that is, it solves $\Delta^N u=0$ throughout $\Omega$.
It is then a reasonable question to ask whether this changes anything. 
For instance, in the context of Holmgren's theorem, may we reduce the 
boundary data information on $I$ while retaining the assertion that $u$ 
vanishes identically?
We may, e.g., choose to require a lower degree of flatness along $I$:
\begin{equation}
\partial_n^{j-1}u|_I=0 \quad\text{for}\quad j=1,\ldots,R,
\label{eq-cond-weaker}
\end{equation}
where $1\le R\le 2N$. We call \eqref{eq-cond-weaker} a condition of
vanishing \emph{sub-Cauchy data}.  

\begin{prob} {\rm(Global Holmgren problem)}
Suppose $u$ is smooth in $\Omega\cup I$ and solves $\Delta^Nu=0$ on 
$\Omega$ and has the flatness given by \eqref{eq-cond-weaker} on $I$, for some 
$R=1,\ldots,2N$. For which values of $R$ does it follow that $u(z)\equiv0$ 
on $\Omega$?
\end{prob}

\noindent{\sc Digression on the global Holmgren problem I.}
When $R=2N$, we see that $u(z)\equiv0$ follows from Holmgren's theorem, by
choosing a suitable sequence of neighborhoods $\mathcal{O}$. Another instance
is when $R=N$ and $I=\partial\Omega$. Indeed, in this case, we recognize in
\eqref{eq-cond-weaker} the vanishing of Dirichlet boundary data for the
equation $\Delta^Nu=0$, which necessarily forces $u(z)\equiv0$ given that
we have a global solution. 
When $R<N$ and $I=\partial\Omega$, it is easy to
add additional smooth non-trivial Dirichlet boundary data to  
\eqref{eq-cond-weaker} and obtain a nontrivial solution to $\Delta^N u=0$ on 
$\Omega$ with \eqref{eq-cond-weaker}. So for $I=\partial\Omega$, we see that
the assumptions imply $u(z)\equiv0$ if and only if $N\le R\le 2N$.  
It remains to analyze the case when $I\ne\partial\Omega$. Then either 
$\partial\Omega\setminus I$ consists of a point, or it is an arc.
When $I\ne\partial\Omega$, we cannot
expect that \eqref{eq-cond-weaker} with $R=N$ will be enough to force $u$ 
to vanish on $\Omega$, Indeed, if $\partial\Omega\setminus I$ is a nontrivial 
arc, we may add nontrivial smooth Dirichlet data on $\partial\Omega\setminus I$
\eqref{eq-cond-weaker} and by solving the Dirichlet problem we obtain a 
nontrivial function $u$ with $\Delta^Nu=0$ on $\Omega$ having 
\eqref{eq-cond-weaker} with $R=N$. Similarly, when $\partial\Omega\setminus I$ 
consists of a single point, we may still obtain a nontrivial solution $u$ 
by supplying distributional Dirichlet boundary data which are supported at that 
single point. So, to have a chance to get uniqueness, we must require that 
$N<R\le 2N$. As the case $R=2N$ follows from Holmgren's theorem, the 
interesting interval is $N<R<2N$. For $N=1$, this interval is \emph{empty}.     
However, for $N>1$ it is nonempty, and the problem becomes interesting.

\noindent{\sc Digression on the global Holmgren problem II.}
Holmgren's theorem has a much wider scope than what is presented here. 
It applies a wide range of linear partial differential equations with 
real-analytic coefficients, provided that the given arc $I$ is 
non-characteristic (see \cite{John1}; we also refer the reader to the 
related work of H\"ormander \cite{Horm1}). So the results obtained here 
suggest that we should replace $\Delta^N$ by a more general
linear partial differential operator and see to what happens in the above 
global Holmgren problem. Naturally, the properties of the given linear partial
differential operator and the geometry of the arc $I$ will both influence 
the the answer.     


\subsection{Higher dimensions and nonlinear partial differential 
equations}

The global Holmgren problem makes sense also in $\R^n$, and it is natural
to look for a solution there as well. Moreover, if we think of the global
Holmgren problem as asking for uniqueness of the solution for given (not 
necessarily vanishing) sub-Cauchy data, the problem makes sense also for
non-linear partial differential equations.

We analyze the biharmonic equation in three dimensions in Section 
\ref{sec-3D} with respect to the global Holmgren problem. Along the way,
we obtain a factorization of the biharmonic operator $\Delta^2$ as the
product of two $3\times3$ differential operator matrices which are somewhat 
analogous to the \emph{squares} of the Cauchy-Riemann operators 
$\partial_z,\bar\partial_z$ from the two-dimensional setting.

\subsection{The local Schwarz function of an arc} 
If an arc $I$ is real-analytically smooth, there exists an open neighborhood 
$\mathcal{O}_I$ of the arc and a holomorphic function 
$S_I:\mathcal{O}_I\to\C$ such that $S_I(z)=\bar z$ holds along $I$. This 
function $S_I$ is called the \emph{local Schwarz function}. In fact, the 
existence of a local Schwarz function is equivalent to real-analytic smoothness
of the arc. It is possible to ask only for a so-called one-sided Schwarz 
function, which need not be holomorphic in all of $\mathcal{O}_I$ but only 
in $\mathcal{O}_I\cap\Omega$ (the side which belongs to $\Omega$). Already the
existence of a one-sided Schwarz function is very restrictive on the local
geometry of $I$ \cite{Sakai}. To ensure uniqueness of the local Schwarz 
function $S_I$ (including the one-sided setting), we shall \emph{assume that 
both $\mathcal{O}_I$ and $\mathcal{O}_I\cap\Omega$ are connected open sets}.

\subsection{A condition which gives uniqueness for the global 
Holmgren problem}

As before, we let $\Omega$ be a bounded simply connected domain in the plane.
We have obtained the following criterion. In the statement, ``nontrivial'' 
means ``not identically equal to $0$''. Moreover, as above, \emph{we assume that
the set $\mathcal{O}_I\cap\Omega$} -- the domain of definition of the 
(one-sided) Schwarz function -- \emph{is connected}. 

\begin{thm}
Suppose there exists a nontrivial function $u:\Omega\to\C$ with $\Delta^Nu=0$ 
on $\Omega$, which extends to a $C^{2N-1}$-smooth function on $\Omega\cup I$,
where $I$ is a real-analytic arc of $\partial\Omega$. If $R$ is an integer 
with $N<R\le 2N$, and if $u$ has the flatness given by \eqref{eq-cond-weaker} 
on $I$, then there exists a nontrivial function of the form
\begin{equation}
\Psi(z,w)=\psi_N(z)w^{N-1}+\psi_{N-1}(z)w^{N-2}+\cdots+\psi_1(z),
\label{eq-Phi}
\end{equation}
where each $\psi_j(z)$ is holomorphic in $\Omega$ for $j=1,\ldots,N$, such that
\begin{equation}
\Psi(z,w)=\mathrm{O}(|w-S_I(z)|^{R-N})\quad\text{as}\,\,\, w\to S_I(z),
\label{eq-flat1}
\end{equation}
for $z\in\Omega\cap\mathcal{O}_I$. 
\label{thm-1.1}
\end{thm}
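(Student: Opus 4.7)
The plan is to construct $\Psi$ as the $N$-th derivative in $z$ of the holomorphic polarization of $u$, and to deduce the vanishing along the Schwarz curve from the sub-Cauchy data by analytic continuation.

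Since $u$ is $N$-harmonic on $\Omega$, it is real-analytic there, hence admits a polarization: a function $U(z,w)$ holomorphic on an open neighborhood $V\subset\C^2$ of the anti-diagonal $\{(z,\bar z):z\in\Omega\}$, with $U(z,\bar z)=u(z)$. The relation $\Delta^N u = 4^N\partial_z^N\dbar_z^N u = 0$ polarizes to $\partial_z^N\partial_w^N U \equiv 0$ on $V$. I would set $\Psi(z,w) := \partial_z^N U(z,w)$. Since $\partial_w^N\Psi = 0$, the function $\Psi(z,\cdot)$ is, on each $w$-slice of $V$, a polynomial of degree $<N$ in $w$, so one can write uniquely $\Psi(z,w) = \sum_{j=1}^N \psi_j(z)\,w^{j-1}$. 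To check that the $\psi_j$ are holomorphic on $\Omega$, I would extract them by evaluating at $w=\bar z$: the change-of-basis formula gives $\psi_{l+1}(z) = \sum_{k=l}^{N-1}\binom{k}{l}(-\bar z)^{k-l}\partial_w^k\Psi(z,\bar z)/k!$; the chain rule $\dbar_z[\partial_w^k\Psi(z,\bar z)] = \partial_w^{k+1}\Psi(z,\bar z)$ (valid since $\Psi$ is holomorphic in $(z,w)$) together with the identity $\binom{k}{l}(k-l) = k\binom{k-1}{l}$ makes the resulting expression for $\dbar_z\psi_{l+1}$ telescope to zero.

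Next, to translate the sub-Cauchy data: since $R\le 2N$ and $u\in C^{2N-1}$ on $\Omega\cup I$, the assumption $\partial_n^{j-1}u|_I=0$ for $1\le j\le R$ is equivalent to the vanishing of every partial derivative of $u$ of total order $<R$ on $I$. Using the polarization identities $\partial_z u = \partial_z U|_{w=\bar z}$ and $\dbar_z u = \partial_w U|_{w=\bar z}$, iterated, this becomes $\partial_z^a\partial_w^b U(z,\bar z)=0$ for $z\in I$ and $a+b<R$. Specializing to $a=N$ and $b<R-N$, and using $\bar z = S_I(z)$ on $I$, we get $\partial_w^b\Psi(z,S_I(z))=0$ for $z\in I$. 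Since $\Psi(z,w)$ is polynomial in $w$ with holomorphic-in-$z$ coefficients on $\Omega$ and $S_I$ is holomorphic on $\mathcal{O}_I$, the function $z\mapsto\partial_w^b\Psi(z,S_I(z))$ is holomorphic on the connected set $\Omega\cap\mathcal{O}_I$; vanishing on $I$ forces it to vanish throughout $\Omega\cap\mathcal{O}_I$, by the boundary-uniqueness principle for holomorphic functions on a domain bounded in part by the real-analytic arc $I$ (via Schwarz reflection). The vanishing of the first $R-N$ derivatives of $\Psi(z,\cdot)$ at $w=S_I(z)$ is exactly the desired estimate \eqref{eq-flat1}.

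To show $\Psi\not\equiv 0$, I would argue by contradiction. If $\Psi=0$, then $\partial_z^N U\equiv 0$, and evaluating at $w=\bar z$ yields $\partial_z^N u\equiv 0$ on $\Omega$. Setting $v_k := \partial_z^{N-k} u$ for $k=1,\ldots,N$, induction on $k$ shows that $\partial_z v_k = v_{k-1}$ vanishes (with $v_0 := \partial_z^N u=0$), so each $v_k$ is antiholomorphic on $\Omega$; it extends continuously to $I$ (as $u\in C^{2N-1}$), and $v_k|_I = \partial_z^{N-k}u|_I = 0$ because $N-k<R$. The boundary-uniqueness principle applied to $\overline{v_k}$ (again via Schwarz reflection across $I$) forces $v_k\equiv 0$; for $k=N$ this gives $u\equiv 0$, contradicting the hypothesis. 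Hence $\Psi$ is nontrivial. The main obstacle is the middle step: transferring real-normal flatness on the real arc $I$ to global divisibility of the polynomial $\Psi(z,\cdot)$ by $(w-S_I(z))^{R-N}$ over $\Omega\cap\mathcal{O}_I$. It combines the polarization of real flatness into mixed $(z,w)$-partial vanishing, the replacement of $\bar z$ by the Schwarz function $S_I(z)$ (crucially using the real-analyticity of $I$), and analytic continuation from $I$ into the connected open set $\Omega\cap\mathcal{O}_I$.
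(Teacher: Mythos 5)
Your proposal is correct and, at its core, coincides with the paper's proof: your $\Psi(z,w)=\partial_z^N U(z,w)$, expanded as a polynomial of degree $<N$ in $w$ with holomorphic coefficients, is exactly the paper's $\Psi(z,w)=\sum_k U_k(z)w^{k-1}$ built from the special Almansi representation of $\partial_z^N u$, and your transfer of the sub-Cauchy data via $S_I(z)=\bar z$ on $I$ followed by analytic continuation into $\Omega\cap\mathcal{O}_I$ is the same continuation step. Two local substitutions are worth comparing. First, you obtain the polyanalytic structure by polarization rather than by quoting the Almansi expansion; this is equivalent, and your extraction formula $\psi_{l+1}(z)=\sum_{k=l}^{N-1}\binom{k}{l}(-\bar z)^{k-l}\,\partial_w^k\Psi(z,\bar z)/k!$ has a side benefit you should state explicitly: since $\partial_w^k\Psi(z,\bar z)=\bar\partial_z^k\partial_z^N u(z)$ involves derivatives of $u$ of total order at most $2N-1$, it shows each $\psi_j$ extends continuously to $\Omega\cup I$ --- precisely the boundary regularity (obtained in the paper by inverting the triangular system \eqref{eq-calc1.1} iteratively) that is needed, but is left tacit in your write-up, before the boundary-uniqueness principle can be applied to the holomorphic function $z\mapsto\partial_w^b\Psi(z,S_I(z))$; without continuity of the coefficients up to $I$, ``vanishing on $I$'' has no meaning for that function, since the polarization $U$ itself does not extend to $I$. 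Second, for nontriviality the paper invokes Holmgren's theorem for the elliptic operator $\partial_z^N$, whereas you run a self-contained induction: each $v_k=\partial_z^{N-k}u$ is antiholomorphic with vanishing continuous boundary values on $I$ (order $N-k<R$, and $N-k\le 2N-1$ ensures continuity up to $I$), hence vanishes identically by reflection/Privalov. This keeps the entire argument within one tool --- boundary uniqueness for holomorphic functions --- at the negligible cost of redoing a unique-continuation step the paper gets by citation; both routes are sound.
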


The above theorem asserts that $w=S_I(z)$ is the solution (root) of a polynomial
equation [over the ring of holomorphic functions on $\Omega$]
\begin{equation}
\Psi(z,w)=\psi_N(z)w^{N-1}+\psi_{N-1}(z)w^{N-2}+\cdots+\psi_1(z)=0,
\label{eq-fundeq1}
\end{equation}
and that $\Psi(z,w)$ has the indicated additional flatness along $w=S_I(z)$ 
if $N+1<R$. \emph{An equivalent way to express the flatness condition 
\eqref{eq-flat1} is to say that $w=S_I(z)$ solves simultaneously the system 
of equations}
\begin{equation}
\partial_w^{j-1}\Psi(z,w)=\frac{(N-1)!}{(N-j)!}\psi_N(z)w^{N-j}
+\cdots+(j-1)!\psi_j(z)=0,\qquad j=1,\ldots,R-N.
\label{eq-fundeq2}
\end{equation}
The equation \eqref{eq-fundeq1} results from considering $j=1$ in 
\eqref{eq-fundeq2}. Let $J$, $1\le J\le N$, 
be the largest integer such that the holomorphic function $\psi_J(z)$ is 
nontrivial. Since the expression $\Psi(z,w)$ is nontrivial, such an integer 
$J$ must exist. As a polynomial equation 
in $w$, \eqref{eq-fundeq1} will have at most $J-1$ roots for any fixed
$z\in\Omega$. Counting multiplicities, the number of roots is constant
and equal to $J-1$, for points $z\in\Omega$ where $\psi_J(z)\ne0$.
At the exceptional points where $\psi_J(z)=0$, the number of roots is smaller.
With the possible exception of branch points, where some of the roots coalesce,
the roots define locally well-defined holomorphic functions in $\Omega\setminus
\mathrm{Z}(\psi_J)$, where 
\[
\mathrm{Z}(\psi_J):=\{z\in\Omega:\,\psi_J(z)=0\}.
\]
If we take the system \eqref{eq-fundeq2} into account, we see that $J>R-N$.
Indeed, we may effectively rewrite \eqref{eq-fundeq2} in the form
\begin{equation}
\partial_w^{j-1}\Psi(z,w)=\frac{(J-1)!}{(J-j)!}\psi_J(z)w^{J-j}
+\cdots+(j-1)!\psi_j(z)=0,\qquad j=1,\ldots,R-N,
\label{eq-fundeq3}
\end{equation}
and if $J\le R-N$, we may plug in $j=J$ into \eqref{eq-fundeq3}, which would
result in 
\[
\partial_w^{J-1}\Psi(z,w)=(J-1)!\psi_J(z)=0,
\]
which cannot be solved by $w=S_I(z)$ [except on the zero set 
$\mathrm{Z}(\psi_J)$], a contradiction. We think of \eqref{eq-fundeq1} as
saying that $w=S_I(z)$ is an algebraic expression over the ring of holomorphic 
functions on $\Omega$. In particular, the local Schwarz function $S_I$ extends 
to a multivalued holomorphic function in $\Omega\setminus\mathrm{Z}(\psi_J)$ 
with branch cuts. So in particular $S_I$ makes sense not just on 
$\Omega\cap\mathcal{O}_I$ [this is an interior neighborhood of the arc $I$], 
but more generally in $\Omega\setminus\mathrm{Z}(\psi_J)$, if we allow for 
multivaluedness and branch cuts. The condition that $w=S_I(z)$ solves 
\eqref{eq-fundeq1} is therefore rather restrictive.
To emphasize the implications of the above theorem, we formulate a ``negative
version''.

\begin{cor}
Let $I$ be a real-analytically smooth arc of $\partial\Omega$, and suppose
that $R$ is an integer with $N<R\le 2N$. Suppose in addition that the local 
Schwarz function $S_I$ does not solve the system 
\eqref{eq-fundeq2} on $\mathcal{O}_I\cap\Omega$ for any nontrivial function 
$\Psi(z,w)$ of the form \eqref{eq-Phi}.
Then every function function $u$ on $\Omega$, which extends to a 
$C^{2N-1}$-smooth function on $\Omega\cup I$, with $\Delta^Nu=0$ on $\Omega$
and flatness given by \eqref{eq-cond-weaker} on $I$, must be trivial: 
$u(z)\equiv0$.
\label{cor-1}
\end{cor}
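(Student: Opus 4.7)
My plan is to deduce Corollary \ref{cor-1} from Theorem \ref{thm-1.1} by contraposition. Assume, toward a contradiction, that there exists a nontrivial function $u$ on $\Omega$ with $\Delta^Nu=0$, which extends $C^{2N-1}$-smoothly to $\Omega\cup I$ and satisfies the sub-Cauchy vanishing \eqref{eq-cond-weaker} on $I$ for some integer $R$ with $N<R\le2N$. Theorem \ref{thm-1.1} then supplies, at no cost, a nontrivial polynomial $\Psi(z,w)$ of the form \eqref{eq-Phi}, whose coefficients $\psi_1,\ldots,\psi_N$ are holomorphic in $\Omega$, such that the flatness property \eqref{eq-flat1} holds along the graph $w=S_I(z)$ for $z\in\Omega\cap\mathcal{O}_I$.

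The next step is to translate the big-O flatness \eqref{eq-flat1} into the differential system \eqref{eq-fundeq2}. Since $\Psi(z,\,\cdot\,)$ is a polynomial in $w$ of degree at most $N-1$, the statement that $\Psi(z,w)=\mathrm{O}(|w-S_I(z)|^{R-N})$ as $w\to S_I(z)$ is, by elementary Taylor expansion of $\Psi(z,\,\cdot\,)$ at the point $w=S_I(z)$, equivalent to the vanishing of each derivative $\partial_w^{j-1}\Psi(z,w)$ at $w=S_I(z)$ for $j=1,\ldots,R-N$. This is precisely the system \eqref{eq-fundeq2}, and indeed the equivalence is already recorded in the paragraph immediately following Theorem \ref{thm-1.1}; so no extra argument is needed at this step.

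Combining these two observations, the nontrivial $\Psi$ produced by Theorem \ref{thm-1.1} is witness to the fact that $w=S_I(z)$ does solve the system \eqref{eq-fundeq2} on $\mathcal{O}_I\cap\Omega$ for some nontrivial $\Psi$ of the form \eqref{eq-Phi}. This directly contradicts the standing hypothesis of Corollary \ref{cor-1}, and hence no such nontrivial $u$ can exist; every admissible $u$ must satisfy $u(z)\equiv0$. Since the entire argument consists of invoking Theorem \ref{thm-1.1} and performing a routine Taylor expansion, there is no genuine obstacle: the corollary is essentially Theorem \ref{thm-1.1} restated in its negative (contrapositive) form, with emphasis placed on the algebraic obstruction rather than on the existence of the solution $u$.
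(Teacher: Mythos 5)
Your proposal is correct and matches the paper's own proof, which simply observes that Corollary \ref{cor-1} is the contrapositive (``negative formulation'') of Theorem \ref{thm-1.1}; the equivalence of the flatness condition \eqref{eq-flat1} with the system \eqref{eq-fundeq2} via Taylor expansion is, as you note, already recorded in the paper. Nothing further is needed.
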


In particular, for $N=2$ and $R=3$, the condition \eqref{eq-flat1}
says that $w=S_I(z)$ solves the \emph{linear equation}
\[
\psi_2(z)w+\psi_1(z)=0,
\]
with solution 
\[
w=S_I(z)=-\frac{\psi_1(z)}{\psi_2(z)},
\]
which expresses a \emph{meromorphic function in} $\Omega$. 
We formulate this conclusion as a corollary.

\begin{cor}
Let $I$ be a real-analytically smooth arc of $\partial\Omega$, and suppose
that the local Schwarz function $S_I$ does not extend to a meromorphic function
on $\Omega$. 
Then every function function $u$ on $\Omega$, which extends to a 
$C^{3}$-smooth function on $\Omega\cup I$, with $\Delta^2u=0$ on $\Omega$
and flatness given by 
\[
u|_I=0,\,\,\,\partial_n u|_I=0,\,\,\,\partial^2_n u|_I=0,
\]
must be trivial: $u(z)\equiv0$.
\label{cor-2}
\end{cor}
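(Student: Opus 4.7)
The plan is to deduce Corollary \ref{cor-2} directly from Corollary \ref{cor-1} applied with $N = 2$ and $R = 3$, which is the smallest nontrivial case of the global Holmgren problem ($N < R \le 2N$). For these parameters, a polynomial $\Psi(z,w)$ of the form \eqref{eq-Phi} degenerates to a linear expression
\[
\Psi(z,w) = \psi_2(z)\, w + \psi_1(z),
\]
with $\psi_1, \psi_2$ holomorphic on $\Omega$, and the system \eqref{eq-fundeq2} collapses to its $j = 1$ equation alone, namely $\Psi(z, S_I(z)) = 0$ on $\mathcal{O}_I \cap \Omega$.

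The task is therefore to show that, under the assumption that $S_I$ admits no meromorphic extension to $\Omega$, there can be no nontrivial $\Psi$ of this form satisfying $\psi_2(z) S_I(z) + \psi_1(z) = 0$ on $\mathcal{O}_I \cap \Omega$. I would split on whether $\psi_2$ is identically zero on $\Omega$. If $\psi_2 \not\equiv 0$, I would solve for $S_I$ to obtain
\[
S_I(z) = -\frac{\psi_1(z)}{\psi_2(z)},
\]
which exhibits $S_I$ as the restriction to $\mathcal{O}_I \cap \Omega$ of a meromorphic function on $\Omega$, contradicting the standing hypothesis. If instead $\psi_2 \equiv 0$ on $\Omega$, then the relation forces $\psi_1 \equiv 0$ on $\mathcal{O}_I \cap \Omega$, and the identity theorem for holomorphic functions (using the assumption from section 1.4 that $\mathcal{O}_I \cap \Omega$ is a connected open subset of $\Omega$, together with the connectedness of $\Omega$ itself) gives $\psi_1 \equiv 0$ on all of $\Omega$, so $\Psi$ is trivial, again a contradiction.

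With both alternatives excluded, the hypotheses of Corollary \ref{cor-1} are in force and yield $u \equiv 0$. There is no substantive obstacle here beyond verifying that the $\psi_2 \not\equiv 0$ branch genuinely delivers a meromorphic extension on the full domain $\Omega$ rather than merely on a one-sided neighborhood of $I$; this is automatic because $\psi_1$ and $\psi_2$ are by construction holomorphic throughout $\Omega$, so that the quotient $-\psi_1/\psi_2$ is meromorphic on $\Omega$ and agrees with $S_I$ on the open set $\mathcal{O}_I \cap \Omega$.
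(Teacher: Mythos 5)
Your proposal is correct and takes essentially the same route as the paper: the paper proves Corollary \ref{cor-2} by applying Theorem \ref{thm-1.1} with $N=2$, $R=3$ and observing that the equation \eqref{eq-fundeq1} is then linear, so that $S_I=-\psi_1/\psi_2$ extends meromorphically to $\Omega$. Your case split excluding $\psi_2\equiv0$ is precisely the paper's general observation that $J>R-N$ (specialized to $N=2$, $R=3$), established by the same identity-theorem reasoning on the connected open set $\mathcal{O}_I\cap\Omega$.
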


\begin{rem}
Corollary \ref{cor-2} should be compared with what can be said in the analogous
situation in three dimensions (see Theorem \ref{thm-4.4} below).
\end{rem}

It is well-known 
that having a local Schwarz function which extends meromorphically to $\Omega$
puts a strong rigidity condition on the arc $I$. 
For instance, if $\Omega$
is the domain interior to an ellipse, and $I$ is any nontrivial arc of 
$\partial\Omega$ [i.e., of positive length], then $S_I$ extends to a 
meromorphic function in $\Omega$ if and only if the ellipse is a circle.  
This means that the Global Holmgren Problem gives uniqueness in this case, with 
$N=2$ and $R=3$, unless the ellipse is circular. We formalize this as a 
corollary.

\begin{cor}
Suppose $\Omega$ is the domain interior to an ellipse, and that $I$ is a 
nontrivial arc of the ellipse $\partial\Omega$. Suppose $u$ is $C^3$-smooth in
$\Omega\cup I$, and $\Delta^2u=0$ on $\Omega$. If $u$ has
\[
u|_I=0,\,\,\,\partial_n u|_I=0,\,\,\,\partial^2_n u|_I=0,
\] 
then $u(z)\equiv0$ unless the ellipse is a circle.
\label{cor-3}
\end{cor}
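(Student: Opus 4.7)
The plan is to reduce the corollary to Corollary \ref{cor-2}: it suffices to show that, when $\partial\Omega$ is a non-circular ellipse, the local Schwarz function $S_I$ of every nontrivial arc $I\subset\partial\Omega$ fails to extend to a single-valued meromorphic function on $\Omega$.

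First I would compute the Schwarz function explicitly. Writing the ellipse as $\frac{x^2}{a^2}+\frac{y^2}{b^2}=1$ and substituting $x=\frac{1}{2}(z+\bar z)$, $y=\frac{1}{2\imag}(z-\bar z)$ yields the quadratic relation
\begin{equation*}
(b^2-a^2)\bar z^2+2(a^2+b^2)z\bar z+(b^2-a^2)z^2-4a^2b^2=0,
\end{equation*}
which holds along $I$. Setting $c^2:=a^2-b^2$ (assume $a>b$), one solves for $\bar z$ to get
\begin{equation*}
S_I(z)=\frac{(a^2+b^2)z-2ab\sqrt{z^2-c^2}}{c^2},
\end{equation*}
with the branch of the square root chosen so that $S_I(z)=\bar z$ on $I$.

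Next I would observe that when $a\ne b$, so $c>0$, the foci $z=\pm c$ both lie strictly inside $\Omega$, and $z^2-c^2$ has simple zeros there, so $\sqrt{z^2-c^2}$ has honest branch points at $\pm c$. The two roots of the quadratic differ by $\frac{4ab}{c^2}\sqrt{z^2-c^2}$, which is not identically zero; analytic continuation of $S_I$ around a small loop enclosing either focus therefore exchanges the two branches. Hence $S_I$ is genuinely multi-valued on $\Omega\setminus\{-c,c\}$ and cannot extend to a single-valued meromorphic function on $\Omega$. Corollary \ref{cor-2} then delivers $u\equiv 0$.

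The only nontrivial step is verifying that the branch points at $\pm c$ are not removable, i.e., that $S_I$ has two distinct local branches near each focus; this is immediate from the discriminant $4a^2b^2(z^2-c^2)$, which vanishes to first order at $\pm c$ exactly when $a\ne b$. In the borderline case $a=b$ the derivation collapses: $c=0$ and the defining relation becomes $\bar z=a^2/z$, which is single-valued and meromorphic on $\Omega$. This is precisely why the circular case must be excluded from the statement, and indeed nontrivial biharmonic functions $u$ satisfying the flatness condition are easily seen to exist on a disk.
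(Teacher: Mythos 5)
Your proposal is correct and takes essentially the same route as the paper: both reduce the corollary to Corollary \ref{cor-2} via the fact that the Schwarz function of a non-circular ellipse has square-root branch points at the foci $\pm c$ (which lie inside $\Omega$) and therefore admits no single-valued meromorphic extension to $\Omega$. The only difference is that the paper cites this fact as well known (cf.\ \cite{Dav}, \cite{Sha}), whereas you verify it by explicit computation --- correctly, apart from the harmless slip that the discriminant of your quadratic in $\bar z$ is $16a^2b^2(z^2-c^2)$ rather than $4a^2b^2(z^2-c^2)$, which does not affect the simple vanishing at $\pm c$.
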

 
\begin{rem}
The smoothness condition in Theorem \ref{thm-1.1} and Corollary 
\ref{cor-1} is somewhat excessive. 
For instance, in Corollaries \ref{cor-2} and \ref{cor-3}, the 
$C^3$-smoothness assumption may be reduced to $C^2$-smoothness.  
The additional smoothness makes for an easy presentation by avoiding 
technicalities.
\label{rem-1.8}
\end{rem}

\subsection{Meromorphic Schwarz function and construction
of arc-flat biharmonic functions}
Here, we study the necessity of the Schwarz function condition in Corollary
\ref{cor-2}.

\begin{thm}
Suppose $\partial\Omega$ is a $C^\infty$-smooth Jordan curve, and that
$I\subset\partial\Omega$ is a real-analytically smooth arc, such that 
the complementary arc $\partial\Omega\setminus I$ is nontrivial as well. 
If the local Schwarz function $S_I$ extends to a meromorphic function 
in $\Omega$ with finitely many poles, then there exists a nontrivial 
function $u$ on $\Omega$, which extends $C^\infty$-smoothly to
$\Omega\cup I$, with $\Delta^2u=0$ on $\Omega$ and flatness given 
by  
\[
u|_I=0,\,\,\,\partial_n u|_I=0,\,\,\,\partial^2_n u|_I=0.
\]
\label{thm-2}
\end{thm}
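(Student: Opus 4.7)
The plan is to construct the required biharmonic function explicitly via the Goursat representation, exploiting the finite-pole structure of the meromorphic extension of $S_I$. Write $S_I = Q/P$ on $\Omega$, where $P$ is a polynomial whose zeros coincide (with multiplicities) with the poles of $S_I$, and $Q := PS_I$ is thereby holomorphic on $\Omega$. Since $S_I$ is already holomorphic on the neighborhood $\mathcal{O}_I$ of $I$, the polynomial $P$ may be taken nonvanishing on a neighborhood of $I$.

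Every real biharmonic function on the simply connected domain $\Omega$ admits a Goursat representation $u(z) = \re[\chi(z) + \bar z\,\varphi(z)]$ with $\chi,\varphi$ holomorphic on $\Omega$. Introduce the two-variable extension
\[
\Phi(z,w) := \chi(z) + w\,\varphi(z) + \chi^*(w) + z\,\varphi^*(w),
\]
where $\chi^*(w) := \overline{\chi(\bar w)}$ and $\varphi^*(w) := \overline{\varphi(\bar w)}$; then $2u(z) = \Phi(z,\bar z)$. Since biharmonicity together with the assumed flatness forces $u$ to be real-analytic across $I$, the vanishing of $u$ to order $3$ on $I$ is equivalent to $\Phi$ vanishing to order $3$ on the complexified curve $\{w = S_I(z)\}$. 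This unfolds into the system
\begin{align*}
\chi(z) + S_I(z)\,\varphi(z) + \chi^*(S_I(z)) + z\,\varphi^*(S_I(z)) &= 0,\\
\varphi(z) + (\chi^*)'(S_I(z)) + z\,(\varphi^*)'(S_I(z)) &= 0,\\
(\chi^*)''(S_I(z)) + z\,(\varphi^*)''(S_I(z)) &= 0,
\end{align*}
to hold on $\mathcal{O}_I$.

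The heart of the proof is then to produce a nontrivial pair $(\chi,\varphi)$, holomorphic on $\Omega$ and $C^\infty$-smooth up to $\Omega \cup I$, that solves this system. The approach is to prescribe nontrivial smooth boundary data on the complementary arc $\partial\Omega \setminus I$ (nontrivial by hypothesis) and to extend inward by solving a harmonic Dirichlet-type problem. Multiplying the three identities by suitable powers of $P$ converts them into a coupled system with coefficients in $\mathcal{O}(\Omega)$, and the compatibility of this system on all of $\Omega$ is guaranteed precisely by the identity $P\,S_I \equiv Q \in \mathcal{O}(\Omega)$, that is, by the meromorphic extension hypothesis. A slicker alternative is to reverse the assignment from Theorem~\ref{thm-1.1}: starting from $\psi_2 := P$ and $\psi_1 := -Q$ (so that $\Psi(z,w) = Pw - Q$ has $S_I$ as its root), reconstruct a biharmonic $u$ whose associated polynomial is this $\Psi$.

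The principal obstacle is to guarantee simultaneously that $u$ is nontrivial and that it is $C^\infty$-smooth up to $\Omega \cup I$. The smoothness part should follow from the $C^\infty$-regularity of $\partial\Omega$ together with a Schwarz-reflection argument across the real-analytic arc $I$. Nontriviality, which is the more delicate point, must be extracted from the freedom in the boundary data on $\partial\Omega \setminus I$; the meromorphic extension of $S_I$ enters here not as an obstruction but as the compatibility condition ensuring that the linear map from boundary data on $\partial\Omega \setminus I$ to the triple $(u|_I,\partial_n u|_I,\partial_n^2 u|_I)$ has a nontrivial kernel.
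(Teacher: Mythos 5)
Your Goursat complexification is a correct reformulation as far as it goes: encoding the flatness as the vanishing of $\Phi(z,w)$ to order $3$ along $w=S_I(z)$ is equivalent to the boundary conditions (although your justification is off -- flatness of order $3$ does \emph{not} force $u$ to be real-analytic across $I$, since the third normal derivative remains free and may be non-analytic; the correct bridge between $I$ and the complexified curve is boundary uniqueness for holomorphic functions \`a la Privalov). But the proposal stops exactly where the theorem begins. In your system the functions $\chi^*,\varphi^*$ are \emph{reflections of the unknowns} $\chi,\varphi$, not independent quantities, so what you have is a nonlocal functional equation along $I$, not a Dirichlet-type boundary value problem; the recipe ``prescribe nontrivial smooth data on $\partial\Omega\setminus I$ and solve a harmonic Dirichlet-type problem'' contains no mechanism that makes the three identities hold on $\mathcal{O}_I\cap\Omega$. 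Multiplying through by powers of $P$ does not help: the identity $PS_I\equiv Q$ merely restates the hypothesis and does not ``guarantee compatibility'' of anything. Likewise, nontriviality is asserted (``the linear map \dots has a nontrivial kernel'') without any argument, and this is precisely the delicate point.

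The paper's proof supplies the missing mechanism, and it is genuinely different from what you sketch. One transplants to the disk via a conformal map $\varphi:\D\to\Omega$, which extends meromorphically across $\Itilde=\varphi^{-1}(I)$ exactly because of the Schwarz-function hypothesis, writes $v=v_1+|\varphi|^2v_2$, and reduces the flatness to the two second-order conditions \eqref{eq-condv1v2}. The crux is the explicit kernel \eqref{eq-defF1.1}: since the real part of the Herglotz factor $(1+\bar\xi\eta)/(1-\bar\xi\eta)$ is a Poisson kernel vanishing on $\Te\setminus\{\xi\}$, the choice \eqref{eq-defV2} with $\nu$ \emph{any} real measure on the complementary arc makes the first condition in \eqref{eq-condv1v2} hold on $\Itilde$ automatically; the second condition then becomes a second-order linear ODE for $V_1$, whose only obstructions are the finitely many poles of $S_I\circ\varphi$, and these are suppressed by the finitely many linear constraints \eqref{eq-finite1} on $\nu$ -- solvable nontrivially because the measures on $\Te\setminus\Itilde$ form an infinite-dimensional space. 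This is where the finite-pole hypothesis actually enters. Note also that the construction first achieves only the vanishing of all \emph{second-order} derivatives along $I$; full flatness is obtained afterwards by subtracting an affine function, and nontriviality holds because $\tilde u$ is not affine. Your ``slicker alternative'' of reversing Theorem \ref{thm-1.1} fails for a structural reason: for $N=2$, $R=3$ the conclusion of Theorem \ref{thm-1.1} is the single necessary condition $\Psi(z,S_I(z))=0$, i.e., $\partial_z^2u|_I=0$. Solving $\partial_z^2u=-Q+\bar zP$ does not give $\Delta u|_I=0$ -- the kernel of $\partial_z^2$ on real functions consists only of $c|z|^2$ plus affine functions, far too few degrees of freedom to impose a condition along an arc -- nor the order-$0$ and order-$1$ vanishing. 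The implication in Theorem \ref{thm-1.1} is one-way, and its reversal is exactly the content of Theorem \ref{thm-2}; your proposal assumes it rather than proves it.
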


\begin{rem}
When $\Omega=\D$, the open unit disk, the Schwarz function for the boundary
is $S_\Te(z)=1/z$, which is a rational function, and in particular, 
meromorphic in $\D$. So if $I$ is a nontrivial arc of the unit circle 
$\Te=\partial\D$, and $\Te\setminus I$ is a nontrivial arc as well, then 
Theorem \ref{thm-2} tells us that there exists a nontrivial biharmonic 
function $u$ on $\D$ which is $C^\infty$-smooth on $\D\cup I$ and has 
the flatness 
\[
u|_I=0,\,\,\,\partial_n u|_I=0,\,\,\,\partial^2_n u|_I=0.
\]
In this case, an explicit function $u$ can be found, which works for any 
nontrivial arc $I\subset\Te$ with $I\ne\Te$. Indeed, we may use a suitable
rotation of the function
\[
u(z)=\frac{(1-|z|^2)^3}{|1-z|^4},
\]    
which is biharmonic with the required flatness except for a boundary 
singularity at $z=1$. This shows that the circle is exceptional in Corollary
\ref{cor-3}. We should mention here that the above kernel $u(z)$ appeared
possibly for the first time in \cite{AbHe}, and then later in \cite{BoHe}
and \cite{Olo}. Elias Stein pointed out that very similar kernels in the upper 
half plane appear in connection with the theory of \emph{axially symmetric
potentials} \cite{Wei}. 
\end{rem}

\begin{rem}
Corollary \ref{cor-2} and Theorem \ref{thm-2} settle completely the issue
of the Global Holmgren problem for $\Delta^2$ with the flatness condition
\eqref{eq-flat1} [for $R=3$], in the case when the meromorphic extension 
of the Schwarz function $S_I$ to $\Omega$ has finitely many poles.
Most likely this [technical] finiteness condition may be removed.
Moreover, it seems likely that there should exists an analogue of 
Theorem \ref{thm-2} which applies to $N>2$. More precisely, suppose that 
$N<R\le 2N$,  and that the local Schwarz function $w=S_I(z)$ solves a 
polynomial equation system of equations \eqref{eq-fundeq3}
where the highest order nontrivial coefficient $\psi_{J}(z)$ has only 
finitely many zeros in $\D$, and that $I\subset\partial\Omega$ is a 
nontrivial real-analytically smooth arc whose complementary arc is 
nontrivial as well. 
Then there should exist a nontrivial function $u$ on $\Omega$ which is 
$C^{2N-1}$-smooth on $\Omega\cup I$ with $\Delta^Nu=0$ on $\Omega$ having the
flatness given by \eqref{eq-flat1} on $I$. 
\end{rem}

As a corollary to Corollary \ref{cor-2} and Theorem \ref{thm-2}, we 
obtain a complete resolution for real-analytically smooth boundaries.

\begin{cor}
Suppose $\partial\Omega$ is a real-analytically smooth Jordan curve, and that
$I\subset\partial\Omega$ is a an arc, such that 
the complementary arc $\partial\Omega\setminus I$ is nontrivial as well. 
Then there exists a nontrivial function $u$ on $\Omega$, which extends 
$C^2$-smoothly to $\Omega\cup I$, with $\Delta^2u=0$ on $\Omega$ and 
flatness given by  
\[
u|_I=0,\,\,\,\partial_n u|_I=0,\,\,\,\partial^2_n u|_I=0,
\]
if and only if the local Schwarz function $S_I$ extends to a meromorphic 
function in $\Omega$.
\label{cor-2.1}
\end{cor}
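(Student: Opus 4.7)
My plan is to deduce Corollary \ref{cor-2.1} by combining Corollary \ref{cor-2} (in the $C^2$-smoothness form noted in Remark \ref{rem-1.8}) with Theorem \ref{thm-2}. The only substantive point beyond citing these two results is to bridge the gap between the hypothesis in Theorem \ref{thm-2}, which demands that $S_I$ extend meromorphically with \emph{finitely many poles}, and the cleaner hypothesis available here, which is only meromorphy. Real-analyticity of the entire boundary $\partial\Omega$ is exactly what closes this gap.

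For the $(\Rightarrow)$ direction I would argue contrapositively: if $S_I$ does not extend to a meromorphic function on $\Omega$, then Corollary \ref{cor-2}, applied in the $C^2$-smoothness formulation from Remark \ref{rem-1.8}, forces any $u$ meeting the stated hypotheses to vanish identically. So the existence of a nontrivial such $u$ already implies that $S_I$ extends meromorphically to $\Omega$; no further work is needed in this direction.

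For the $(\Leftarrow)$ direction, suppose $S_I$ extends to a meromorphic function $F$ on $\Omega$. Because $\partial\Omega$ is real-analytic, the full-boundary Schwarz function $S_{\partial\Omega}$ exists and is holomorphic in some two-sided open neighborhood $\mathcal{N}$ of $\partial\Omega$, which one may choose thin enough that the inner collar $\mathcal{N}\cap\Omega$ is connected. On the nonempty open set $\mathcal{N}\cap\mathcal{O}_I\cap\Omega$, both $F$ and $S_{\partial\Omega}$ are holomorphic and equal $\bar z$ along $I$, hence they agree there. The meromorphic function $F - S_{\partial\Omega}$ on $\mathcal{N}\cap\Omega$ therefore vanishes on a nonempty open subset of a connected set, so it vanishes identically, and $F$ is actually holomorphic throughout $\mathcal{N}\cap\Omega$. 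This confines the pole set of $F$ to the compact subset $\Omega\setminus\mathcal{N}$ of $\Omega$, where a discrete set must be finite. Theorem \ref{thm-2} now applies, since real-analyticity implies $C^\infty$-smoothness and the finite-pole hypothesis has just been verified; the conclusion yields the required nontrivial biharmonic $u$.

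The main obstacle, such as it is, lies precisely in that finite-pole verification in the $(\Leftarrow)$ direction: one must recognize that real-analyticity of \emph{all} of $\partial\Omega$ (not merely of $I$) forces the meromorphic continuation of $S_I$ to be holomorphic in a full one-sided collar of $\partial\Omega$, ruling out pole accumulation at the boundary. Once this observation is in hand, Corollary \ref{cor-2.1} is essentially a repackaging of Corollary \ref{cor-2} and Theorem \ref{thm-2}.
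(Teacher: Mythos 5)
Your proposal is correct and follows essentially the same route as the paper: the forward direction via Corollary \ref{cor-2} with the $C^2$-smoothness reduction of Remark \ref{rem-1.8}, and the reverse via Theorem \ref{thm-2} together with the observation that real-analyticity of all of $\partial\Omega$ makes $S_I$ holomorphic in a full collar, confining the poles of the meromorphic extension to a compact subset of $\Omega$ and hence forcing them to be finite in number. Your write-up merely spells out in detail (via the identity theorem applied to $F-S_{\partial\Omega}$ on a connected collar) the finite-pole verification that the paper states in one sentence.
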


\begin{rem}
In the context of Corollary \ref{cor-2.1}, the condition that the local Schwarz
function extend to  a meromorphic function in $\Omega$ is the same as asking
that $\Omega$ be a \emph{quadrature domain} (see Subsection \ref{subsec-qd}). 
\end{rem}

\section{The proof of Theorem \ref{thm-1.1} and its corollaries}

\subsection{Almansi expansion}
It is well-known that a function $u$ which is \emph{$N$-harmonic on $\Omega$}, 
that is, has $\Delta^N u=0$ on $\Omega$, has an
\emph{Almansi expansion}
\begin{equation}
u(z)=u_1(z)+|z|^2u_2(z)+\cdots+|z|^{2N-2}u_{N}(z),
\label{eq-alm1}
\end{equation}
where the functions $u_j$ are all harmonic in $\Omega$; the 
``coefficient functions'' $u_j$ are all uniquely determined by the given
function $u$. On the other hand, every function $u$ of the form 
\eqref{eq-alm1}, where the functions $u_j$ are harmonic, is $N$-harmonic.

\begin{proof}[Proof of Theorem \ref{thm-1.1}]
The function $u$ is $N$-harmonic in $\Omega$, and hence it has an Almansi
representation \eqref{eq-alm1}. Next, for $j=1,2,3,\ldots$, we we consider 
the function 
\[
U(z):=\partial_z^Nu(z),
\]
where $\partial_z$ is the complex differentiation operator defined in 
Subsection \ref{subsec-1.1}. From the flatness assumption on $u$, we know that
\begin{equation}
\bar\partial_z^{j-1} U(z)=0,\qquad z\in I,\,\,\,\,j=1,\ldots,R-N.
\label{eq-flat1.01}
\end{equation} 
Since
\[
\bar\partial_z^NU(z)=\bar\partial_z^N\partial_z^N u(x)=4^{-N}\Delta^N u(z)=0,
\qquad z\in\Omega,
\]
the Almansi representation for $U$ has the special form
\[
U(z)=U_1(z)+\bar z U_2(z)+\cdots+\bar z^{N-1}U_{N}(z),
\] 
where the functions $U_j$, $j=0,\ldots,N-1$  are all holomorphic in $\Omega$,
and uniquely determined by the function $U$. As $u$ is assumed $C^{2N-1}$-smooth
on $\Omega\cup I$, the function $U$ is $C^{N-1}$-smooth on $\Omega\cup I$. In
particular, 
\begin{equation}
\bar\partial_z^{j-1}U(z)=\bar\partial_z^{j-1}\sum_{k=1}^{N}\bar z^{k-1}
U_{k}(z)=\sum_{k=j}^{N}\frac{(k-1)!}{(k-j)!}U_{k}(z)
\label{eq-calc1.1}
\end{equation}
is $C^{2N-j}$-smooth on $\Omega\cup I$ for $j=1,\ldots,N$. By plugging in 
$j=N$ into \eqref{eq-calc1.1}, we find that $U_{N}$ is continuous on 
$\Omega\cup I$. Next, if we plug in $j=N-1$, we find that $U_{N-1}$ 
is continuous on $\Omega\cup I$. Proceeding iteratively, we see that all the
functions $U_{k}$ are continuous on $\Omega\cup I$ ($k=1,\ldots,N$).
In terms the Almansi representation for $U$, the condition \eqref{eq-flat1.01} 
reads
\begin{equation}
\bar\partial_z^{j-1} U(z)
=\sum_{k=j}^{N}\frac{(k-1)!}{(k-j)!}\bar z^{k-j}U_{k}(z)
=0,\qquad z\in I,\,\,\,\,j=1,\ldots,R-N.
\label{eq-flat1.02}
\end{equation} 

We now define the function $\Psi(z,w)$. We declare that $\psi_j(z):=U_j(z)$,
so that the function $\Psi(z,w)$ is given by 
\[
\Psi(z,w):=\sum_{k=1}^{N}\psi_k(z)w^{k-1}=\sum_{k=1}^{N}U_k(z)w^{k-1}.
\]
By differentiating iteratively with respect to $w$, we find that
\[
\partial_w^{j-1}\Psi(z,w)=\partial_w^{j-1}\sum_{k=1}^{N}\psi_k(z)w^{k-1}
=\sum_{k=j}^{N}\frac{(k-1)!}{(k-j)!}\psi_{k}(z)w^{k-j}=
\sum_{k=j}^{N}\frac{(k-1)!}{(k-j)!}U_{k}(z)w^{k-j},
\]
so that 
\begin{equation}
\partial_w^{j-1}\Psi(z,w)\big|_{w:=S_I(z)}=
\sum_{k=j}^{N}\frac{(k-1)!}{(k-j)!}U_{k}(z)[S_I(z)]^{k-j},
\label{eq-calc1.2}
\end{equation}
and according to \eqref{eq-flat1.02}, the right hand side expression in 
\eqref{eq-calc1.2} vanishes on the arc $I$ for $j=1,\ldots,R-N$, as 
$S_I(z)=\bar z$ there. 
But the right hand side of \eqref{eq-calc1.2} is holomorphic on 
$\Omega\cap\mathcal{O}_I$ and extends continuously to 
$(\Omega\cup I)\cap\mathcal{O}_I$ and apparently vanishes on
$I$ for $j=1,\ldots, R-N$, so by the boundary uniqueness theorem for 
holomorphic functions (e.g., 
Privalov's theorem), the right hand side of \eqref{eq-calc1.2} must vanish 
on $\Omega\cap\mathcal{O}_I$:
\[
\partial_w^{j-1}\Psi(z,w)\big|_{w:=S_I(z)}=0,\qquad z\in\Omega\cap\mathcal{O}_I,
\,\,\,\,j=1,\ldots,R-N.
\]  
This is the system of equations \eqref{eq-fundeq2}, which by Taylor's formula
is equivalent to the flatness condition \eqref{eq-flat1}. 

It remains to be established that the function $\Psi(z,w)$ is nontrivial. 
Since, by construction, $\Psi(z,\bar z)=U(z)$, it is enough to show that $U$ is 
nontrivial. We know by assumption that $u$ is nontrivial, and that 
$\partial_z^Nu=U$ while $u$ has the flatness \eqref{eq-cond-weaker} along
$I$. If $U$ is trivial, i.e., $U(z)\equiv0$, then $\partial_z^Nu=0$ which
is an elliptic equation of order $N$ and since $R>N$, the flatness 
\eqref{eq-cond-weaker} entails that $u(z)\equiv0$, by Holmgren's theorem.
This contradicts the nontriviality of $u$, and therefore refutes the 
putative assumption that $U$ was trivial. The proof is complete.
\end{proof} 

\begin{proof}[Proof of Corollary \ref{cor-1}]
This is just the negative formulation of Theorem \ref{thm-1.1}.
\end{proof}

\begin{proof}[Proof of Corollary \ref{cor-2}]
In this case where $N=2$, the equation \eqref{eq-fundeq1} is linear, so
by Theorem \ref{thm-1.1} with $N=2$ and $R=3$, the existence of a nontrivial
biharmonic function on $\Omega$ with flatness \eqref{eq-flat1} along $I$
forces the local Schwarz function $S_I$ to extend meromorphically to $\Omega$.  
\end{proof}

\begin{proof}[Proof of Corollary \ref{cor-3}]
It is well-known that the Schwarz function for an non-circular ellipse 
develops a branch
cut along the segment between the focal points (cf. \cite{Dav}, \cite{Sha}),
so it cannot in particular be meromorphic in $\Omega$. So, in view of Corollary
\ref{cor-2}, we must have $u(z)\equiv0$, as claimed.
\end{proof}

\section{Quadrature domains and the construction of arc-flat
biharmonic functions}
\label{sec-constr}

\subsection{Quadrature domains}
\label{subsec-qd}

As before, $\Omega$ is a bounded simply connected domain in $\C$. For the 
moment, we assume
in addition that the boundary $\partial\Omega$ is a real-analytically smooth
Jordan curve. As before, $I\subset \partial\Omega$ is a nontrivial arc.
Then the local Schwarz function $S_I$ extends to a local Schwarz function for
the whole boundary curve; we write $S_{\partial\Omega}$ for the extension.  
In \cite{AhSh}, Aharonov and Shapiro show that in this setting, the following 
two conditions are equivalent:
\smallskip

\noindent{(i)} $\,\,\,$ the Schwarz function $S_{\partial\Omega}$ extends to 
a meromorphic function in $\Omega$, 

\noindent{(ii)} $\,\,$ the domain $\Omega$ is a quadrature domain.
\smallskip

Here, the statement that $\Omega$ is a \emph{quadrature domain} means that for
all harmonic functions $h$ on $\Omega$ that are area-integrable 
($h\in L^1(\Omega)$),   
\[
\int_\Omega h\diff A=\langle h,\alpha\rangle_\Omega,
\]
for some distribution $\alpha$ with \emph{finite support contained inside} 
$\Omega$.
The notation $\langle\cdot,\cdot\rangle_\Omega$ is the dual action which 
extends (to the setting of distributions) the standard integral
\[
\langle f,g\rangle_\Omega=\int_\Omega fg\diff A
\]
when $fg\in L^1(\Omega)$. It was also explained in \cite{AhSh} that the 
conditions (i)-(ii) are equivalent a third condition:
\smallskip

\noindent{(iii)} $\,\,$ any conformal map $\varphi:\D\to\Omega$ [with 
$\varphi(\D)=\Omega$] is a \emph{rational function}.
\smallskip

It is easy to see that the condition (iii) entails that the boundary curve
$\partial\Omega$ is \emph{algebraic}. Let us try to understand why the 
implication (i)$\implies$(iii) holds. So, we assume the Schwarz function 
extends to a meromorphic function in $\Omega$, and form the function
\[
\Psi(\zeta):=
\begin{cases}
S_{\partial\Omega}(\varphi(\zeta)),\qquad \zeta\in\bar\D,
\\
\overline{\varphi(1/\bar\zeta)},\qquad\quad\, \zeta\in\D_e,
\end{cases}
\]
where $\D_e:=\{\zeta\in\C:\,\,|\zeta|>1\}$ is the ``exterior disk'', and
$\varphi$ is any [surjective] conformal map $\D\to\Omega$. By the assumed
real-analyticity of $\partial\Omega$, the conformal map $\varphi$ extends
holomorphically (and conformally) across the circle $\Te=\partial\D$, see,
e.g. \cite{Pom}. In particular, $\Psi(\zeta)$ is well-defined on $\Te$,
and is holomorphic in $\C\setminus\Te$. As the two definitions in $\C\setminus
\Te$ agree [in the limit sense] along $\Te$, Morera's theorem gives that 
$\Psi$ extends holomorphically across $\Te$. But then $\Psi$ is a rational
function, as it has only finitely many poles and is holomorphic everywhere 
else on the Riemann sphere $\C\cup\{\infty\}$. If we put 
\[
\varphi_{\mathrm{ext}}(\zeta):=\overline{\Psi(1/\bar\zeta)},
\]
then $\varphi_{\mathrm{ext}}$ is a rational function, which agrees with 
$\varphi$ on $\D$. This establishes assertion (iii). 
 
\subsection{Real-analytic arcs with one-sided meromorphic Schwarz function}
We return to the previous setting of a real-analytic arc $I\subset\partial
\Omega$, where $\Omega$ is a bounded simply connected domain whose
boundary $\partial\Omega$ is a $C^\infty$-smooth Jordan curve. We shall assume
that the local Schwarz function extends to a meromorphic function in $\Omega$
with finitely many poles.
In this more general setting, the surjective conformal mapping 
$\varphi:\D\to\Omega$ extends analytically across the arc 
$\Itilde:=\varphi^{-1}(I)$:
the extension is given by
\[
\varphi_{\mathrm{ext}}(\zeta):=\overline{S_I\circ\varphi(1/\bar\zeta)},\qquad
\zeta\in\D_e.
\]
The extension is then meromorphic in $\D\cup\D_e\cup \Itilde$, with finitely
many poles; we denote it by $\varphi$ as well. 

\begin{proof}[Proof of Theorem \ref{thm-2}]
We assume for simplicity that the arc $I$ is open, i.e. does not contain its 
endpoints. 
Also, without loss of generality, we may assume that the origin $0$ is in 
$\Omega$.
We let $\varphi:\D\to\Omega$ be a surjective conformal mapping
with $\varphi(0)=0$, 
which by the above argument extends meromorphically to 
$\D\cup\D_e\cup \Itilde$, with finitelywith finitely many poles.
Here, $\Itilde\subset \Te$ be the arc of the circle for which 
$\varphi(\Itilde)=I\subset\partial\Omega$. 
We let $F$ be a the function 
\begin{equation}
F(\zeta,\xi):=\frac{1}{\varphi(\zeta)}
\int_0^\zeta\frac{1+\bar\xi\eta}{1-\bar\xi\eta}\varphi'(\eta)
\diff\eta,
\label{eq-defF1.1}
\end{equation}
where $|\xi|=1$ is assumed. For fixed $\xi\notin \Itilde$, the function 
$F(\cdot,\xi)$ is well-defined and holomorphic in a neighborhood of 
$\D\cup\Itilde$. Moreover, $F(\zeta,\xi)$ enjoys an estimate in terms of
a (radial) function of $|\zeta|$ which is independent of the parameter 
$\xi\in\Te$. 

Next, we let proceed by considering functions real-valued $v_1,v_2$ that are 
harmonic in $\D$ (to be determined shortly), and associate holomorphic 
functions $V_1,V_2$ with $\im V_1(0)=\im V_2(0)=0$ and $\re V_j=v_j$ for 
$j=1,2$.  Then $2\partial_\zeta v_2(\zeta)=V_2'(\zeta)$, for $j=1,2$.
We form the associated function 
\begin{equation}
v(\zeta):=v_1(\zeta)+|\varphi(\zeta)|^2v_2(\zeta).
\label{eq-defv}
\end{equation}
The functions $v_1,v_2$ are real-valued and harmonic, and we calculate that
\begin{equation}
\Delta v=\Delta[v_1+|\varphi|^2v_2]=\Delta[|\varphi|^2v_2]=4
|\varphi'|^2\bigg\{v_2+2\re\bigg[
\frac{\varphi}{\varphi'}\partial_\zeta v_2\bigg]
\bigg\},
\label{eq-DEV1.02}
\end{equation}
and
\begin
{equation}
2\partial_\zeta \frac{1}{\varphi'(\zeta)}\partial_\zeta[v(\zeta)]
=[V_1'/\varphi']'(\zeta)+\overline{\varphi(\zeta)}\big\{2V_2'(\zeta)
+\varphi(\zeta)[V_2'/\varphi']'(\zeta)\big\}.
\label{eq-DEV1.03}
\end{equation}
Now, we require of $v_1,v_2$ that the function $v$ gets to have vanishing
second order derivatives along $\Itilde$ in the following sense:
\begin{equation}
\Delta v|_{\Itilde}=0,\quad
\partial_\zeta \frac{1}{\varphi'}\partial_\zeta[v]\bigg|_{\Itilde}=0. 
\label{eq-condv1v2}
\end{equation}
Since 
\[
v_2+2\re\bigg[\frac{\varphi}{\varphi'}\partial_\zeta v_2\bigg]=
\re\bigg\{V_2+\frac{\varphi}{\varphi'}V_2'\bigg\}=
\re\bigg\{\frac{(\varphi V_2')'}{\varphi'}\bigg\},
\]
the first condition in \eqref{eq-condv1v2} may be expressed as
\begin{equation}
\re\bigg\{\frac{(\varphi V_2')'}{\varphi'}\bigg\}=0\quad\text{on}\,\,\, 
\Itilde.
\label{eq-condv1}
\end{equation}
If we let $V_2$ be the holomorphic function with $V_2(0)=0$ whose derivative is
given by
\begin{equation}
V_2'(\zeta)=\int_\Te F(\zeta,\xi)\diff\nu(\xi),
\label{eq-defV2}
\end{equation}
where $F$ is as in \eqref{eq-defF1.1} and $\nu$ is a \emph{real-valued 
Borel measure supported on the complementary arc} $\Te\setminus\Itilde$, then 
condition \eqref{eq-condv1} is automatically met, so that the first 
requirement in \eqref{eq-condv1v2} is satisfied.
It remains to meet the second requirement of \eqref{eq-condv1v2} as well.
In view of \eqref{eq-DEV1.03}, and the uniqueness theorem for holomorphic 
functions, we may write the second requirement in the form
\[
[V_1'/\varphi']'(\zeta)+\bar\varphi(1/\bar\zeta)\big\{2V_2'(\zeta)+
\varphi(\zeta)[V_2'/\varphi']'(\zeta)\big\}=0,
\]
which is the same as
\[
\bigg[\frac{V_1'}{\varphi'}\bigg]'(\zeta)
+\frac{\bar\varphi(1/\bar\zeta)}{\varphi(\zeta)}\,\frac{\diff}{\diff\zeta}
\bigg\{\frac{[\varphi(\zeta)]^2V_2'(\zeta)}{\varphi'(\zeta)}\bigg\}=0.
\]
We think of this is as a second order linear differential equation in $V_1$, 
with a nice holomorphic solution $V_1$ in a neighborhood of $\D\cup\Itilde$ 
unless the finitely many poles in $\D$ of the function 
\[
\frac{\bar\varphi(1/\bar\zeta)}{\varphi(\zeta)}=\frac{S_{I}(\varphi(\zeta))}
{\varphi(\zeta)}
\] 
are felt. In order to suppress those poles, we may ask that the function 
$V_2'$ should have a sufficiently deep zero at each of those poles in $\D$. 
This amounts to asking that
\begin{equation}
V_2^{(j)}(\zeta)=\int_\Te \partial_\zeta^{j-1} F(\zeta,\xi)\diff\nu(\xi)=0
\qquad j=1,\ldots, j_0(\zeta),
\label{eq-finite1}
\end{equation}
for a finite collection of points $\zeta$ in the disk $\D$.  
Taking real and imaginary parts in \eqref{eq-finite1}, we still are left with
a finite number of linear conditions, and the space of real-valued Borel 
measures supported in $\Te\setminus\Itilde$ is infinite-dimensional. So,
clearly, there exists a nontrivial $\nu$ that satisfies \eqref{eq-finite1}. 
If we like, we may even find such a $\nu$ with $C^\infty$-smooth density. 
Then the function $V_2$ is nonconstant, and its real part is nonconstant 
as well.  

Finally, we turn to the issue of the biharmonic function $u$ 
on $\Omega$ that we are looking for. We put 
$\tilde u(z):=v\circ\varphi^{-1}(z)$ and observe that with the choice of 
the Borel measure $\nu$, the function $\tilde u$ is real-valued with
\[
\Delta \tilde u|_I=0,\quad\partial_z^2\tilde u|_I=0,
\]
by \eqref{eq-condv1v2}. This means that all partial derivatives of $\tilde u$
of order $2$ vanish along $I$, which says that both 
$\partial_x\tilde u$ and $\partial_y\tilde u$ have gradient vanishing along
$I$. So both $\partial_x\tilde u$ and $\partial_y\tilde u$ are \emph{constant}
on $I$. If we repeat this argument, we see that there exists an affine 
function $A(z):=A_0+A_1x+A_2y$ such that $u:=\tilde u-A$ has the required
flatness along $I$. Since by construction $\tilde u$ cannot itself be affine, 
this completes the proof of the theorem.
\end{proof}

\begin{proof}[Proof of Corollary \ref{cor-2.1}]
In view of Remark \ref{rem-1.8}, the forward implication follows from
Corollary \ref{cor-2}. In the reverse direction, we appeal to Theorem 
\ref{thm-2} and use the observation that the local Schwarz function $S_I$ is 
automatically holomorphic in a neighborhood of the entire boundary 
$\partial\Omega$, so it can only have finitely many poles in $\Omega$. 
\end{proof}

\section{The biharmonic equation in three dimensions and
the global Holmgren problem}
\label{sec-3D}

\subsection{Matrix-valued differential operators}
In $\C\cong\R^2$, we may identify a complex-valued function $u=u_1+\imag u_2$,
where $u_1,u_2$ are real-valued, with a column vector:
\[
u\sim 
\left(
\begin{array}{c}
u_1
\\
u_2
\\
\end{array}
\right).
\]
In the same fashion, we identify the differential operators $\partial_z$ and 
$\bar\partial_z$ with $2\times2$ matrix-valued differential operators
\[
2\partial_z\sim 
\left(
\begin{array}{cc}
\partial_x &\partial_y
\\
-\partial_y &\partial_x
\\
\end{array}
\right),
\qquad
2\bar\partial_z\sim 
\left(
\begin{array}{cc}
\partial_x &-\partial_y
\\
\partial_y &\partial_x
\\
\end{array}
\right),
\] 
so that
\[
\Delta=4\partial_z\bar\partial_z\sim
\left(
\begin{array}{cc}
\Delta &0
\\
0 &\Delta
\\
\end{array}
\right),
\]
which identifies the Laplacian $\Delta$ with its diagonal lift. Along the
same lines, we see that
\[
4\partial_z^2\sim 
\left(
\begin{array}{cc}
\partial_x^2-\partial_y^2 &2\partial_x\partial_y
\\
-2\partial_x\partial_y &\partial_x^2-\partial_y^2
\\
\end{array}
\right),
\qquad
4\bar\partial_z^2\sim 
\left(
\begin{array}{cc}
\partial_x^2-\partial_y^2 &-2\partial_x\partial_y
\\
2\partial_x\partial_y &\partial_x^2-\partial_y^2
\\
\end{array}
\right),
\] 
and the main identity which we have used in this paper is simply that
\begin{equation}
\left(
\begin{array}{cc}
\partial_x^2-\partial_y^2 &2\partial_x\partial_y
\\
-2\partial_x\partial_y &\partial_x^2-\partial_y^2
\\
\end{array}
\right) 
\left(
\begin{array}{cc}
\partial_x^2-\partial_y^2 &-2\partial_x\partial_y
\\
2\partial_x\partial_y &\partial_x^2-\partial_y^2
\\
\end{array}
\right)=
\left(
\begin{array}{cc}
\Delta^2 &0
\\
0 &\Delta^2
\\
\end{array}
\right).
\label{eq-mainid1.1}
\end{equation}
While it seems unclear what should be the canonical analogue of the 
Cauchy-Riemann operators $\partial_z,\bar\partial_z$ in the three-dimensional 
setting, it turns out to be possible to find suitable analogues of their 
squares! Indeed, there is a three-dimensional analogue of the factorization 
\eqref{eq-mainid1.1}.
We write $x=(x_1,x_2,x_3)$ for a point in $\R^3$, and let $\partial_j$ denote 
the partial derivative with respect to $x_j$, for $j=1,2,3$, and let
\[
\Delta:=\partial_1^2+\partial_2^2+\partial_3^2
\]
be the three-dimensional Laplacian. We then define the $3\times3$ matrix-valued 
differential operators
\[
\Lop:=\left(
\begin{array}{ccc}
\partial_1^2-\partial_2^2-\partial_3^2 &2\partial_1\partial_2&
2\partial_1\partial_3
\\
-2\partial_1\partial_2 &\partial_1^2-\partial_2^2+\partial_3^2
&-2\partial_2\partial_3
\\
-2\partial_1\partial_3&-2\partial_2\partial_3&
\partial_1^2+\partial_2^2-\partial_3^2
\end{array}
\right)
\]
and
\[
\Lop':=\left(
\begin{array}{ccc}
\partial_1^2-\partial_2^2-\partial_3^2 &-2\partial_1\partial_2&
-2\partial_1\partial_3
\\
2\partial_1\partial_2 &\partial_1^2-\partial_2^2+\partial_3^2
&-2\partial_2\partial_3
\\
2\partial_1\partial_3&-2\partial_2\partial_3&
\partial_1^2+\partial_2^2-\partial_3^2
\end{array}
\right).
\]

\begin{prop}
The matrix-valued partial differential operators $\Lop,\Lop'$ commute and 
factor the bilaplacian:
\[
\Lop\Lop'=\Lop'\Lop=\left(
\begin{array}{ccc}
\Delta^2 &0&0
\\
0 &\Delta^2&0
\\
0&0&\Delta^2
\\
\end{array}
\right).
\]
\end{prop}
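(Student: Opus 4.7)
The plan is to first observe the symmetry $\Lop' = \Lop^T$: comparing the two matrices entry by entry, the diagonals coincide, the $(2,3)$ and $(3,2)$ entries (which involve $\partial_2\partial_3$) already agree, and the remaining off-diagonal entries of $\Lop'$ are precisely those of $\Lop$ with the roles of row and column index swapped. So the proposition reduces to showing $\Lop\Lop^T = \Lop^T\Lop = \Delta^2 I_3$.

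Next I would factor $\Lop = DM$, where $D := \mathrm{diag}(1,-1,-1)$ and $M$ is the symmetric $3\times 3$ operator-valued matrix with entries
\begin{equation*}
M_{ij} := 2\partial_i\partial_j - \delta_{ij}\Delta,\qquad i,j=1,2,3.
\end{equation*}
One verifies the factorization by inspection: scaling the rows of $M$ by the diagonal entries of $D$ flips the signs in rows $2$ and $3$ only, and exactly reproduces the sign pattern appearing in $\Lop$. Writing $M = 2P - \Delta I_3$ with $P := (\partial_i\partial_j)_{i,j=1}^3$, the key algebraic observation is the rank-one identity
\begin{equation*}
(P^2)_{ij} = \sum_{k=1}^{3}\partial_i\partial_k\,\partial_k\partial_j = \partial_i\partial_j\,\Delta,
\end{equation*}
i.e.\ $P^2 = \Delta P$ (equivalently, $\nabla^{T}\nabla = \Delta$ as a scalar). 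Consequently
\begin{equation*}
M^2 = 4P^2 - 4\Delta P + \Delta^2 I_3 = \Delta^2 I_3.
\end{equation*}

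Since both $D$ and $M$ are symmetric, $\Lop^T = (DM)^T = MD$, and using $D^2 = I_3$ I conclude
\begin{equation*}
\Lop\Lop^T = DM\cdot MD = D\,M^2\, D = \Delta^2 D^2 = \Delta^2 I_3,
\end{equation*}
and analogously $\Lop^T\Lop = MD\cdot DM = M^2 = \Delta^2 I_3$. This yields both the commutation and the factorization of the bilaplacian in one stroke. The only step requiring genuine insight is \emph{spotting} the factorization $\Lop = DM$ with $M$ symmetric and satisfying the quadratic identity $M^2 = \Delta^2 I_3$; without this observation one is forced into a brute-force verification of nine operator-matrix entries, which is routine but tedious, whereas with it the proof reduces to the one-line scalar identity $\nabla^{T}\nabla = \Delta$.
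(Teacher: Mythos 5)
Your proof is correct, and it takes a genuinely different route from the paper's. The paper proves the proposition by brute force: it computes all nine entries of $\Lop\Lop'$ one at a time, checking that the diagonal entries collapse to $(\partial_1^2+\partial_2^2+\partial_3^2)^2=\Delta^2$ and the off-diagonal entries cancel, and then disposes of the reverse product $\Lop'\Lop$ by noting that $\Lop'$ is obtained from $\Lop$ under the substitution $x_1\mapsto-x_1$, so the same computation applies. You instead observe that $\Lop'=\Lop^T$ and factor $\Lop=DM$ with $D=\mathrm{diag}(1,-1,-1)$ and $M=2P-\Delta I_3$, where $P=(\partial_i\partial_j)_{i,j}$; I verified both identifications entry by entry and they hold, and the algebraic manipulations ($(DM)^T=MD$, regrouping of products) are legitimate since the entries lie in the commutative ring of constant-coefficient differential operators. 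The whole proposition then collapses to the one-line identity $P^2=\Delta P$, giving $M^2=\Delta^2 I_3$ and hence $\Lop\Lop'=DM^2D=\Delta^2 I_3$ and $\Lop'\Lop=MD^2M=\Delta^2 I_3$ simultaneously. What your route buys: it explains the sign pattern structurally ($M$ is $\Delta$ times a formal reflection, since $P/\Delta$ is formally idempotent), it yields commutation and factorization in one stroke rather than as two separate verifications, and it generalizes immediately --- in $\R^n$ one has $M:=2(\partial_i\partial_j)_{i,j}-\Delta I_n$ with $M^2=\Delta^2 I_n$, and any constant matrix $D$ with $D^2=I_n$ produces such a factorization of the diagonally lifted bilaplacian; indeed your factorization with $D=\mathrm{diag}(1,-1)$ recovers exactly the paper's two-dimensional identity $4\partial_z^2\cdot 4\bar\partial_z^2=\Delta^2 I_2$ of \eqref{eq-mainid1.1}, which the paper presents separately. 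What the paper's route buys is that it requires no preliminary insight and records explicitly the entry-level cancellations, but it is longer and leaves the existence of the factorization looking accidental, whereas your argument shows why it must hold.
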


\begin{proof}
We first observe that it enough to check $\Lop\Lop'$ equals the diagonally 
lifted bilaplacian, because $\Lop'\Lop$ amounts to much the same computation
(after all, $\Lop'$ equals $\Lop$ after the change of variables 
$x_1\mapsto-x_1$).
The entry in the $(1,1)$ corner position of the product equals
\[
(\partial_1^2-\partial_2^2-\partial_3^2)^2+4(\partial_1\partial_2)^2+
4(\partial_1\partial_3)^2=(\partial_1^2+\partial_2^2+\partial_3^2)^2=\Delta^2.
\]
Similarly, the entry in the $(1,2)$ position equals
\[
(\partial_1^2-\partial_2^2-\partial_3^2)(-2\partial_1\partial_2)
+2\partial_1\partial_2(\partial_1^2-\partial_2^2+\partial_3^2)
+2\partial_1\partial_3(-2\partial_2\partial_3)=0,
\]
and the entry in the $(1,3)$ position equals
\[
(\partial_1^2-\partial_2^2-\partial_3^2)(-2\partial_1\partial_3)
+2\partial_1\partial_2(-2\partial_2\partial_3)
+2\partial_1\partial_3(\partial_1^2+\partial_2^2-\partial_3^2)=0.
\]
Furthermore, the entry in the $(2,1)$ position equals
\[
-2\partial_1\partial_2
(\partial_1^2-\partial_2^2-\partial_3^2)
+(\partial_1^2-\partial_2^2+\partial_3^2)
(2\partial_1\partial_2)
-2\partial_2\partial_3
(2\partial_1\partial_3)
=0,
\]
the entry in the $(2,2)$ position equals
\[
-2\partial_1\partial_2
(-2\partial_1\partial_2)
+(\partial_1^2-\partial_2^2+\partial_3^2)^2
-2\partial_2\partial_3
(-2\partial_2\partial_3)
=(\partial_1^2+\partial_2^2+\partial_3^2)^2=\Delta^2,
\]
and the entry in the $(2,3)$ position equals
\[
-2\partial_1\partial_2
(-2\partial_1\partial_3)
+(\partial_1^2-\partial_2^2+\partial_3^2)
(-2\partial_2\partial_3)
-2\partial_2\partial_3(\partial_1^2+\partial_2^2-\partial_3^2)
=0.
\]
Finally, the entry in the $(3,1)$ position equals
\[
-2\partial_1\partial_3(\partial_1^2-\partial_2^2-\partial_3^2)
-2\partial_2\partial_3(2\partial_1\partial_2)
+
(\partial_1^2+\partial_2^2-\partial_3^2)(2\partial_1\partial_3)=0,
\]
the entry in the $(3,2)$ position equals
\[
-2\partial_1\partial_3(-2\partial_1\partial_2)
-2\partial_2\partial_3(\partial_1^2-\partial_2^2+\partial_3^2)
+(\partial_1^2+\partial_2^2-\partial_3^2)(-2\partial_2\partial_3)=0,
\]
and the entry in the $(3,3)$ corner position equals
\[
4(\partial_1\partial_3)^2+4(\partial_2\partial_3)^2
+(\partial_1^2+\partial_2^2-\partial_3^2)^2=
(\partial_1^2+\partial_2^2+\partial_3^2)^2=\Delta^2.
\]
This completes the proof.
\end{proof}

\subsection{An Almansi-type expansion}
We need to have an Almansi-type representation of the biharmonic functions.
We formulate the result in general dimension $n$. We say that the domain
\emph{$\Omega$ is $x_1$-contractive} if $x\in \Omega$ implies that
$(tx_1,x_2,\ldots,x_n)\in\Omega$ for all $t\in[0,1]$. 

\begin{prop}
If $\Omega\subset \R^n$ is convex and $x_1$-contractive, and if 
$u:\Omega\to\R$ is biharmonic, i.e., solves $\Delta^2u=0$, then 
$u(x)=v(x)+x_1w(x)$, where $v,w$ are harmonic in $\Omega$. 
\label{prop-decomp}
\end{prop}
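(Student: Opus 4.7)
The plan is to mimic the classical Almansi decomposition by building $w$ as a harmonic ``antiderivative'' of $\tfrac{1}{2}\Delta u$ in the $x_1$-direction and then setting $v:=u-x_1 w$, so that $v$ is forced to be harmonic by design. Indeed, if both $v$ and $w$ are harmonic, then $\Delta u=\Delta(v+x_1 w)=2\partial_1 w$, so one is essentially obliged to choose $w$ as a harmonic primitive (in $x_1$) of $\tfrac{1}{2}\Delta u$.

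First I would set $h:=\Delta u$; since $\Delta^2 u=0$, the function $h$ is harmonic on $\Omega$. By $x_1$-contractivity, for every $x\in\Omega$ the segment joining $(0,x_2,\ldots,x_n)$ to $x$ lies in $\Omega$, so
\[
\tilde w(x):=\frac{1}{2}\int_0^{x_1} h(t,x_2,\ldots,x_n)\,\diff t
\]
is smooth on $\Omega$ and satisfies $\partial_1\tilde w=\tfrac{1}{2}h$. A short calculation using $(\partial_2^2+\cdots+\partial_n^2)h=-\partial_1^2 h$ (harmonicity of $h$) together with the fundamental theorem of calculus gives
\[
\Delta\tilde w(x)=\tfrac{1}{2}\partial_1 h(0,x_2,\ldots,x_n),
\]
a function of $x_2,\ldots,x_n$ alone.

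Next I would observe that the slice $S_0:=\{x\in\Omega:x_1=0\}$ is nonempty (take $t=0$ in the contractivity) and convex (intersection of $\Omega$ with a hyperplane), and that the projection of $\Omega$ to the hyperplane $\{x_1=0\}$ equals $S_0$. Since the Poisson equation $(\partial_2^2+\cdots+\partial_n^2)g=f$ is solvable on any convex open subset of $\R^{n-1}$ for smooth data $f$, I pick a smooth $g$ on $S_0$ with $(\partial_2^2+\cdots+\partial_n^2)g=-\tfrac{1}{2}\partial_1 h(0,\cdot)$ and extend $g$ to all of $\Omega$ as a function independent of $x_1$. Setting $w:=\tilde w+g$ then gives $\partial_1 w=\tfrac{1}{2}h$ and $\Delta w=0$, so $w$ is harmonic on $\Omega$.

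Finally, setting $v:=u-x_1 w$ and computing
\[
\Delta v=\Delta u-\Delta(x_1 w)=h-2\partial_1 w-x_1\Delta w=h-h-0=0,
\]
I obtain that $v$ is harmonic as well, yielding the decomposition $u=v+x_1 w$. The main obstacle is the one nonalgebraic ingredient: solvability of the Poisson equation on the convex slice $S_0\subset\R^{n-1}$. This is standard (one may use a truncated Newtonian potential, for instance), but it is the only step that does not reduce to direct differentiation; alternatives would be to argue by induction on dimension, or to first treat polynomial biharmonic $u$ and then pass to the general case by a series or approximation argument.
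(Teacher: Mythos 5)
Your proof is correct and takes essentially the same route as the paper: both set $h:=\Delta u$, form the $x_1$-primitive $\tfrac12\int_0^{x_1}h(t,x')\,\diff t$ (well-defined by $x_1$-contractivity), observe via harmonicity of $h$ that its Laplacian equals $\tfrac12\partial_1 h(0,x')$ and so depends only on $x'=(x_2,\ldots,x_n)$, remove it by solving a Poisson equation on the convex slice $\Omega\cap\{x_1=0\}$ (the paper invokes H\"ormander's Section 10.6 for exactly this step), and finally define $v:=u-x_1w$. Your added remark that the projection of $\Omega$ onto $\{x_1=0\}$ equals the slice, so the $x_1$-independent extension of the Poisson solution is defined on all of $\Omega$, is a point the paper uses implicitly.
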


\begin{proof}
By calculation, we have that
\[
\Delta[x_1w]=(\partial_1^2+\cdots+\partial_n^2)[x_1w]=
2\partial_1w+x_1\Delta w,
\]
so that if $w$ is harmonic, $\Delta[x_1w]=2\partial_1w$, and hence, 
$\Delta^2[x_1w]=2\Delta\partial_1w=2\partial_1\Delta w=0$. It is now clear
that any function of the form $u(x)=v(x)+x_1w(x)$, with $v,w$ both harmonic,
is biharmonic. 

We turn to the reverse implication. 
So, we are given a biharmonic function $u$ on $\Omega$, and attempt to find
the two harmonic functions $v,w$ so that $u(x)=v(x)+x_1w(x)$. We first observe
that $h:=\Delta u$ is a harmonic function, and that if $v,w$ exist, we must
have that $h=\Delta[v+x_1w]=\Delta[x_1w]=2\partial_1w$. 
Let $x':=(x_2,\ldots x_n)\in\R^{n-1}$, so that $x=(x_1,x')$. By calculation, 
then,
\[
\Delta\int_0^{x_1}h(t_1,x')\diff t_1=\partial_1h(x)+
\int_0^{x_1}\Delta'h(t_1,x')\diff t_1=\partial_1h(x)-
\int_0^{x_1}\partial_1^2h(t_1,x')\diff t_1=\partial_1 h(0,x'),
\]
where we used that $h$ was harmonic, and let $\Delta'$ denote the Laplacian
with respect to $x'=(x_2,\ldots,x_n)$. Next, we observe that the slice 
$\Omega':=\Omega\cap(\{0\}\times\R^{n-1})$ is convex, which allows us to apply 
the results of Section 10.6 of \cite{Hormbook} and obtain a solution $F$ to the
Poisson equation $\Delta' F(x')=\partial_1 h(0,x')$ on $\Omega'$. 
We now declare $w$ to be the function
\[
w(x)=w(x_1,x'):=\frac12\bigg\{\int_0^{x_1}h(t_1,x')\diff t_1-F(x')\bigg\},
\]
which is well-defined since $\Omega$ was assumed $x_1$-contractive.
In view of the above calculation, $w$ is harmonic in $\Omega$, and we quickly
see that $2\partial_1 w=h$, so that $\Delta[x_1w]=h$. Finally we put $v:=
u-x_1w$ which is harmonic in $\Omega$ by construction.
\end{proof}

\begin{rem}
If $I\subset\partial\Omega$ is a relatively open patch on the boundary 
$\partial\Omega$ -- which is assumed $C^\infty$-smooth -- and $u$ is $C^4$-smooth
on $\Omega\cup I$, then the above proof produces a decomposition $u=v+x_1w$,
where $v,w$ are harmonic in $\Omega$ and $C^2$-smooth on $\Omega\cup I$.  
\label{rem-smooth1.2}
\end{rem}

\subsection{Application of the matrix-valued differential operators}
We return to three dimensions and assume $u$ is biharmonic in a bounded 
convex domain $\Omega$ is $\R^3$ which is $x_1$-contractive. We assume 
that the boundary $\partial\Omega$ is $C^\infty$-smooth, and that 
$I\subset\partial\Omega$ is a nontrivial open patch. We may lift $u$ to a
vector-valued in the following three ways: 
\[
u^{\langle1\rangle}:=u\oplus0\oplus0= 
\left(
\begin{array}{c}
u
\\
0
\\
0
\\
\end{array}
\right),
\quad
u^{\langle2\rangle}:=0\oplus u\oplus0=
\left(
\begin{array}{c}
0
\\
u
\\
0
\\
\end{array}
\right),
\quad 
u^{\langle3\rangle}:=u\oplus 0\oplus0
=\left(
\begin{array}{c}
0
\\
0
\\
u
\\
\end{array}
\right).
\]
We assume that \emph{all partial derivatives of $u$ of order $\le 2$ vanish 
on} $I$, and that $u$ is $C^4$-smooth on $\Omega\cup I$. 
Since $u$ is biharmonic in $\Omega$, we apply Proposition \ref{prop-decomp}
to decompose $u^{\langle1\rangle},u^{\langle2\rangle},u^{\langle3\rangle}$: 
\[
u^{\langle1\rangle}=v^{\langle1\rangle}+x_1w^{\langle1\rangle},
\quad u^{\langle2\rangle}=v^{\langle2\rangle}+x_1w^{\langle2\rangle},
\quad u^{\langle3\rangle}=v^{\langle3\rangle}+x_1w^{\langle3\rangle},
\]
with obvious interpretation of $v^{\langle j\rangle},w^{\langle j\rangle}$ as 
vector-valued functions. In view of Remark \ref{rem-smooth1.2},
the functions $v,w$ are both $C^2$-smooth in $\Omega\cup I$. Moreover, 
by the flatness assumption on $u$, 
\[
\Lop' [u^{\langle j\rangle}]=\Lop'[v^{\langle j\rangle}]+\Lop'[x_1w^{\langle j\rangle}]
=0\,\,\,\,\text{on}\,\,\,I,\qquad j=1,2,3.
\]
We let $\Rop$ denote the matrix-valued operator
\[
\Rop:=\left(
\begin{array}{ccc}
\partial_1^2 &-\partial_1\partial_2&
-\partial_1\partial_3
\\
\partial_1\partial_2 &-\partial_2^2
&-\partial_2\partial_3
\\
\partial_1\partial_3&-\partial_2\partial_3&
-\partial_3^2
\end{array}
\right),
\]
and observe that $\Lop'[h]=2\Rop[h]$ holds for all harmonic $3$-vectors $h$. 
In a similar fashion, we calculate that 
$\Lop'[x_1h]=2\Dop[h]+2x_1\Rop[h]$ for harmonic $3$-vectors $h$, where $\Dop$
is the matrix-valued differential operator
\[
\Dop:=\left(
\begin{array}{ccc}
\partial_1 &-\partial_2&
-\partial_3
\\
\partial_2 &\partial_1
&0
\\
\partial_3&0&
\partial_1
\end{array}
\right).
\]
In particular, for $j=1,2,3$,
\[
0=\Lop'[u^{\langle j\rangle}]=2\Rop[v^{\langle j\rangle}]
+2\Dop[w^{\langle j\rangle}]+2x_1\Rop[w^{\langle j\rangle}] 
\quad\text{on}\,\,\,I,
\]
which we may write in the form
\begin{equation}
x_1\Rop[w^{\langle j\rangle}]=-\Rop[v^{\langle j\rangle}]
-\Dop[w^{\langle j\rangle}] 
\quad\text{on}\,\,\,I,\,\,\,\text{for}\,\,\,j=1,2,3.
\label{eq-x1solve1}
\end{equation}
Let $\Hop[f]$ be the \emph{Hessian matrix operator}:
\[
\Hop:=\left(
\begin{array}{ccc}
\partial_1^2 &\partial_1\partial_2&
\partial_1\partial_3
\\
\partial_1\partial_2 &\partial_2^2
&\partial_2\partial_3
\\
\partial_1\partial_3&\partial_2\partial_3&
\partial_3^2
\end{array}
\right),
\]  
where the similarity with $\Rop$ is apparent.
The system \eqref{eq-x1solve1} amounts to the $3\times3$ matrix equation
\begin{equation}
x_1\Hop[w]=-\Hop[v]+\Bop[w],
\label{eq-system1.3}
\end{equation}
where
\[
\Bop:=\left(
\begin{array}{ccc}
-\partial_1 &\partial_2&
\partial_3
\\
-\partial_2 &-\partial_1
&0
\\
-\partial_3&0&
-\partial_1
\end{array}
\right).
\]

\begin{thm}
If, in the above setting, the Hessian $\Hop[w]$ is nonsingular on the patch 
$I$, then the matrix field
\[
X_1:=(\Hop[w])^{-1}(-\Hop[v]+\Bop[w]),
\]
defined in $\Omega\cup I$ wherever the Hessian $\Hop[w]$ is nonsingular,
has the property that $X_1=x_1\Iop$ holds on the patch $I$, where $\Iop$ 
denotes the identity $3\times3$ matrix.
\label{thm-4.4}
\end{thm}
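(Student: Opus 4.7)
The plan is to observe that Theorem \ref{thm-4.4} is essentially an immediate rearrangement of the matrix identity \eqref{eq-system1.3} that has just been assembled in the preceding paragraphs. First I would recall that the Almansi-type decompositions $u^{\langle j\rangle}=v^{\langle j\rangle}+x_1w^{\langle j\rangle}$ of the three vector liftings of $u$, when processed through the operator $\Lop'$ and combined column-wise into a single $3\times 3$ matrix equation, yield
\[
x_1\,\Hop[w]=-\Hop[v]+\Bop[w]\quad\text{on }I,
\]
the vanishing of $\Lop'[u^{\langle j\rangle}]$ on $I$ being a direct consequence of the flatness hypothesis that all partial derivatives of $u$ of order at most $2$ vanish on the patch $I$.

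Next, under the standing assumption that $\Hop[w]$ is nonsingular on $I$, I would left-multiply both sides of the above identity by the inverse matrix $(\Hop[w])^{-1}$, which gives
\[
x_1\,\Iop=(\Hop[w])^{-1}\bigl(-\Hop[v]+\Bop[w]\bigr)\quad\text{on }I.
\]
The right-hand side is by definition the matrix field $X_1$ of the statement, so the desired identity $X_1=x_1\Iop$ on $I$ follows at once.

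The one point worth emphasizing is that the matrix field $X_1$ is a priori well-defined on the possibly larger subset of $\Omega\cup I$ where the Hessian $\Hop[w]$ happens to remain invertible, whereas the equality with $x_1\Iop$ is asserted only on $I$; this is because the matrix equation \eqref{eq-system1.3} has only been established there. No genuine obstacle arises at this last step: all the substantive work lies in the preceding derivation of \eqref{eq-system1.3} from the factorization $\Lop\Lop'=\Delta^2\Iop$ (i.e.\ Proposition preceding the theorem), the action formulas $\Lop'[h]=2\Rop[h]$ and $\Lop'[x_1h]=2\Dop[h]+2x_1\Rop[h]$ for harmonic vector fields $h$, and the Almansi-type decomposition of Proposition \ref{prop-decomp}. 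Once these ingredients are in place, the theorem reduces to the one-line matrix inversion described above, and no additional analytic or geometric input is required.
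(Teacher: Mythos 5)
Your proposal is correct and coincides with the paper's own (implicit) argument: the paper gives no separate proof of Theorem \ref{thm-4.4} precisely because, as you say, it is the immediate left-multiplication of the already-derived identity \eqref{eq-system1.3} by $(\Hop[w])^{-1}$ on the patch $I$. Your remark that $X_1$ is defined wherever $\Hop[w]$ is invertible while the equality $X_1=x_1\Iop$ is claimed only on $I$ correctly reflects the scope of \eqref{eq-system1.3}.
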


Unfortunately, the determinant of the Hessian of a harmonic function 
may vanish identically (see Lewy \cite{Lew}). However, if the determinant 
vanishes then the given harmonic function is rather special, connected with
the theory of minimal surfaces (Lewy \cite{Lew}). Quite possibly the 
system \eqref{eq-system1.3} should give a lot of information anyway also 
in this case. We mention here Lewy's observation that unless the
harmonic function is affine, the corresponding Hessian has rank at least $2$.  

\begin{rem}
Theorem \ref{thm-4.4} is a three-dimensional analogue of Corollary 
\ref{cor-2}. It should be mentioned that part of the assertion of Theorem 
\ref{thm-4.4} is the equality
\[
\nabla[w+\partial_1v]+x_1\nabla[\partial_1v]=0\quad\text{on}\,\,\, I,
\]
which means that
$x_1$ multiplied by one harmonic vector field equals another harmonic 
vector field.
%
\end{rem}

\section{Acknowledgements}
The author wishes to thank Michael Benedicks, Alexander Borichev, 
Bj\"orn Gustafsson, Alexandru Ionescu, Dmitry Khavinson, Peter Lindqvist,
Vladimir Maz'ya, and Elias Stein  for interesting conversations related 
to the topic of this paper.

\end{document}